\def\tto{\;{\lower 1pt \hbox{$\rightarrow$}}\kern -10pt
	\hbox{\raise 2pt \hbox{$\rightarrow$}}\;}
\def\epsilon{\varepsilon}
\def\R{\Bbb R}
\def\ox{\bar{x}}
\def\dim{\mbox{\rm dim}\,}
\def\sint{\mbox{\rm int}\,}
\begin{document}
	
	\title{The Hartman-Stampacchia Theorem and the Maximum Displacements of Nonvanishing Continuous Vector-Valued Functions}
	
	\author{Nguyen Nang Thieu \and Nguyen Dong Yen}
	
	\institute{Nguyen Nang Thieu \at Institute of Mathematics, Vietnam Academy of	Science and Technology\\
		18 Hoang Quoc Viet, Hanoi 10307, Vietnam\\
		nnthieu@math.ac.vn\and 
		Nguyen Dong Yen, Corresponding author \at
		Institute of Mathematics, Vietnam Academy of	Science and Technology\\
		18 Hoang Quoc Viet, Hanoi 10307, Vietnam\\
		ndyen@math.ac.vn		
	}
	
	\date{Received: date / Accepted: date}
	
	\titlerunning{Maximum Displacements of Nonvanishing Continuous Vector-Valued Functions}
	
	\authorrunning{N. N. Thieu, N. D. Yen}
	
	\maketitle
	

\begin{abstract} This paper aims at giving solutions to six interesting interconnected open questions suggested by Professor Biagio Ricceri. The questions focus on the behavior of nonvanishing continuous vector-valued functions in finite-dimensional normed spaces as well as in infinite-dimensional normed spaces. Using the celebrated Hartman-Stampacchia Theorem (1966) on the solution existence of variational inequalities, we establish sharp lower estimates for the maximum displacements of nonvanishing continuous vector-valued functions. Then, combining the obtained results with suitable tools from functional analysis and several novel geometrical constructions, we get the above-mentioned solutions. 
	
\end{abstract}

\keywords{Normed space \and  convex subset \and nonvanishing continuous vector-valued function \and maximum displacement \and pair of continuous functions \and generalized maximum displacement \and variational inequality}
\subclass{46T20 \and 46B99 \and 49J40 \and 55M20 \and 54B10}

\section{Introduction}\label{Sect_Intro}\label{Sect-1}

Through a series of seminar discussions during our visit to Catania University in October 2024, Professor Biagio Ricceri~\cite{Ricceri_2024} formulated  
six interesting interconnected open questions on the behavior of nonvanishing continuous vector-valued functions in finite-dimensional normed spaces as well as in infinite-dimensional normed spaces. Roughly speaking, these questions revolve around the maximum displacements and generalized maximum displacements of continuous vector-valued functions. The aim of the present paper is to obtain complete or partial solutions to the questions. It turns out that the Hartman-Stampacchia Theorem on the solution existence of variational inequalities (see \cite[Lemma~3.1]{HS_1966} and~\cite[Theorem~3.1]{KS_1980}) serves as a crucial key, unlocking a path to these solutions, which also rely on appropriate tools from functional analysis and several novel geometrical constructions. The obtained results enrich our knowledge on  the behavior of nonvanishing continuous vector-valued functions in finite-dimensional normed spaces as well as in infinite-dimensional normed spaces.

\medskip
Before formulating the notions of maximum displacements and generalized maximum displacements of continuous vector-valued functions, we need to explain some standard notations.

\medskip
Throughout this paper, let $E$ be a real normed space equipped with the norm $\|\cdot\|$ and $K$ be a nonempty convex subset of $E$. The dual space of $E$ is denoted by $E^*$. The norm in $E^*$ is denoted by $\|\cdot\|_{E^*}$. The boundary of a set $D\subset E$ is abbreviated to $\partial D$, while  the interior of $D$ is denoted by $\mbox{\rm int}\,D$.  The open ball (resp., closed ball)  in $E$ with center~$x$ and radius $r$ is denoted by $B_{E} (x, r) $ (resp., $\bar{B}_{E} (x, r) $). The linear subspace generated by some vectors $x^1,\dots,x^m$ in $E$ is abbreviated to ${\rm span}\{x^1,\dots,x^m\}$. From now on, let ${\cal H}$ be a real Hilbert space equipped with the scalar product $\langle \cdot,\cdot \rangle$ and the norm $ \Vert \cdot \Vert $.

\begin{definition} (\textbf{Maximum displacement})\label{def1} The \textit{maximum displacement} of a continuous function $f:K\to E$ is the quantity $\sup\limits_{x\in K}\|f(x)-x\|$.
\end{definition}

If $x\in K$ and $f(x)= x$, that is~ $\|f(x) - x\|=0$, then $x$ is a fixed point of~$f$. The function $f$ needs not to have fixed points. The norm $\|f(x) - x\|$ shows \textit{how far $f$ `displaces' a point $x\in K$ away from its original position}. For this reason, the quantity $\sup\limits_{x\in K}\|f(x)-x\|$ is called the maximum displacement of $f$. A counterpart of the latter is the number $\inf\limits_{x\in K}\|f(x)-x\|$, which signifies the deviation of $f$ from having a fixed point, has been studied by~Goebel in~\cite{Goebel_1973} under the name the \textit{minimal displacement}.

\begin{definition} (\textbf{Generalized maximum displacement})\label{def2}  The \textit{generalized maximum displacement} of a pair of continuous functions $(\varphi,\psi)$, where $\varphi:K\to E^*$ and $\psi:K\to E^*$ with $K$ being a nonempty convex subset of $E$, is the quantity $\sup\limits_{x\in K}\|\varphi(x)-\psi(x)\|_{E^*}$.
\end{definition}

 If $E$ is a Hilbert space, then by the Riesz-Fr\'echet Representation Theorem~\cite[Theorem~5.5]{Brezis_2011} we can identify the dual space $E^*$ with $E$. In that case, setting $\varphi(x)=x$ for all $x\in K$ and using the notations of Definition~\ref{def2}, we see at once that the generalized maximum displacement of the pair $(\varphi,\psi)$ coincides with the maximum displacement of a continuous function $\varphi$ introduced by Definition~\ref{def1}. Thus, in the Hilbert space setting, Definition~\ref{def2} is a natural extension of Definition~\ref{def1}.

\medskip
The above-mentioned six questions from~\cite{Ricceri_2024} can be divided into three groups:

\begin{itemize}
	\item[(a)] Questions 1, 3, and 4 are about maximum displacements.
	\item[(b)] Question 2 is about the approximate eigenvalues of continuous functions, in a sense.
	\item[(c)] Questions 5 and 6 are about generalized maximum displacements.
	\end{itemize}

Question~1 asks to find a continuous function $f:\bar B_{\mathbb R^n}(0,1) \to \R^n$ such that $f(x) \neq 0$ for all $x \in \bar B_{\mathbb R^n}(0,1)$ and the maximum displacement of $f$, denoted by $d_f$, is strictly smaller than a real number $\gamma_f$ (see formula~\eqref{gf} below). 

\medskip
In Section~\ref{Sect-2}, thanks to the Hartman-Stampacchia Theorem  and a valuable observation of Professor Ricceri, we will be able to prove in Theorem~\ref{thm_1a} that $d_f\geq \gamma_f$. This result not only solves the question in the negative, but also establishes a lower bound for the maximum displacement under consideration. Through concrete examples, we will show that the inequality $d_f\geq \gamma_f$ may not hold if the nonvanishing property of~$f$ is violated. However, the inequality can still hold and, in some cases, it even holds as an equality, for many continuous functions $f:\bar B_{\mathbb R^n}(0,1) \to \R^n$ with $f(u)= 0$ for some $u\in \bar B_{\mathbb R^n}(0,1)$.

\medskip
Section~\ref{Sect-3}  extends Theorem~\ref{thm_1a} to the case where instead of the closed ball one considers an arbitrary nonempty compact convex subset of~$\R^n$ containing~$0$ as an interior point. In the same section, the problem of finding sharp estimates for the maximum displacements and the effect of changing the norm in $\mathbb R^n$ are studied in detail. The obtained theorems and corollaries are crucial for our subsequent investigations.   

\medskip
Thanks to the results of Sections~\ref{Sect-2} and~\ref{Sect-3}, in  Section~\ref{Sect-4} we are able to derive complete solutions for the above Questions~2--5 and a partial solution for Question~6.

\medskip
Some concluding remarks are given in Section~\ref{Sect-5}.

\medskip
Despite the fact that the maximum displacement has been used in many papers in mechanics and physics, both notions described by Definitions~\ref{def1} and~\ref{def2} seem to be new in mathematical research. In any case, to the best of our knowledge, all the results of this paper and the ways to obtain them are completely new.

\section{The First Question and Its Solution}\label{Sect-2}

The first question raised by Professor Biagio Ricceri~\cite{Ricceri_2024} is stated as follows.

\medskip
\noindent {\bf Question 1:} {\it Whether there exists a continuous function $f:\bar B_{\mathbb R^n}(0,1) \to \R^n$ such that $f(x) \neq 0$ for all $x \in \bar B_{\mathbb R^n}(0,1)$ and the strict inequality
\begin{equation}\label{ine1}
		\sup_{x \in \bar B_{\mathbb R^n}(0,1)} \|f(x) - x\| < 1 + \inf_{x \in \bar B_{\mathbb R^n}(0,1)} \|f(x)\|
\end{equation} holds true?}

\medskip
Specializing Definition~\ref{def1} to the case where $f$ is a continuous function from $\bar B_{\mathbb R^n}(0,1)$ to $\R^n$, we see that the maximum displacement of $f$ is the number
\begin{equation}\label{df}
	d_f:=\sup\limits_{x \in \bar B_{\mathbb R^n}(0,1)} \|f(x) - x\|. 
\end{equation}
This notation allows us to restate Question~1 as follows: \textit{Is there any nonvanishing continuous function $f:\bar B_{\mathbb R^n}(0,1) \to \R^n$ such that the number 
\begin{equation}\label{gf}
\gamma_f:=1 + \inf\limits_{x \in \bar B_{\mathbb R^n}(0,1)} \|f(x)\|
\end{equation}
is strictly larger than the maximum displacement~$d_f$?}

\medskip
To solve the above question, we will use a valuable observation of Professor Ricceri and the next fundamental theorem on the solution existence of finite-dimensional variational inequalities.
 
\begin{theorem}{\rm (\textbf{The Hartman-Stampacchia Theorem}; See~\cite[Lemma~3.1]{HS_1966} and~\cite[Theorem~3.1]{KS_1980})}\label{hs_thm}
Let $K\subset \R^n$ be a nonempty compact convex set and let $f:K\to\R^n$ be a continuous vector-valued function. Then, there exists a point $\bar x\in K$ such that
\begin{equation}\label{Thm_H-S}
	\langle f(\bar x), y-\bar x\rangle \geq 0\quad\mbox{\rm for all}\ y\in K.
\end{equation}
\end{theorem}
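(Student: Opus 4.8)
The plan is to reduce the statement to Brouwer's fixed point theorem by way of the metric projection onto the closed convex set $K$. Since $K\subset\R^n$ is nonempty, closed, and convex, every point $z\in\R^n$ has a unique nearest point $P_K(z)\in K$; I would first recall (or quickly verify) that the projection map $P_K:\R^n\to K$ is nonexpansive, hence continuous, and that it is characterized by the obtuse-angle condition
\begin{equation}\label{HS-proj}
\langle z-P_K(z),\,y-P_K(z)\rangle\le 0\qquad\mbox{for all}\ y\in K.
\end{equation}

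Next I would introduce the auxiliary self-map $g:K\to K$ defined by $g(x)=P_K\big(x-f(x)\big)$. Since $f$ is continuous and $P_K$ is continuous, $g$ is continuous. As $K$ is nonempty, compact, and convex, Brouwer's fixed point theorem yields a point $\bar x\in K$ with $\bar x=g(\bar x)=P_K\big(\bar x-f(\bar x)\big)$. Applying the characterization~\eqref{HS-proj} with $z=\bar x-f(\bar x)$ and $P_K(z)=\bar x$, one obtains $\langle(\bar x-f(\bar x))-\bar x,\,y-\bar x\rangle\le 0$ for every $y\in K$, that is, $\langle f(\bar x),\,y-\bar x\rangle\ge 0$ for all $y\in K$, which is exactly~\eqref{Thm_H-S}.

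The computations concerning $P_K$ (well-definedness, nonexpansiveness, and the variational inequality~\eqref{HS-proj}) are routine; the only genuine ingredient is Brouwer's theorem, and the one thing to be careful about is that all of its hypotheses, especially compactness of $K$, are indeed available here. An alternative and essentially equivalent route would avoid an explicit fixed-point map: set $C(y):=\{x\in K:\langle f(x),y-x\rangle\ge 0\}$, observe that each $C(y)$ is closed (by continuity of $f$) hence compact, and check that $\{C(y)\}_{y\in K}$ is a KKM family --- if the inclusion $\mathrm{co}\{y_1,\dots,y_m\}\subset\bigcup_i C(y_i)$ failed, a convex combination $x=\sum_i\lambda_i y_i$ with $x\notin C(y_i)$ for all $i$ would give $\langle f(x),x-x\rangle=\sum_i\lambda_i\langle f(x),y_i-x\rangle<0$, an absurdity. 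The KKM lemma then forces $\bigcap_{y\in K}C(y)\ne\emptyset$, and any element of this intersection is the desired $\bar x$. I would present the projection argument as the main proof, since it is shorter and more self-contained, and mention the KKM variant only as a remark.
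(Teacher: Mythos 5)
Your proof is correct: the paper states this theorem without proof, citing Hartman--Stampacchia and Kinderlehrer--Stampacchia, and your projection-plus-Brouwer argument is precisely the classical proof given in the cited reference (the map $g(x)=P_K(x-f(x))$ is a continuous self-map of the nonempty compact convex set $K$, and the obtuse-angle characterization of $P_K$ turns the fixed point into the variational inequality). The KKM variant you sketch is also a standard correct alternative, so no gaps here.
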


The problem of finding a point $\bar x\in K$ satisfying condition~\eqref{Thm_H-S} is called the \textit{variational inequality} defined by the constraint set $K$ and the operator~$f$, which defines a vector field on $K$ by assigning to every point $x\in K$ the vector~$f(x)$.

\medskip
The following theorem answers Question~1 in the negative.  Namely, there is no continuous vector-valued function with the desired properties.  

\begin{theorem}\label{thm_1a}
	For any nonvanishing continuous function $f:\bar B_{\mathbb R^n}(0,1)\to\R^n$, one has 
	\begin{equation}\label{inq1}
	d_f \geq \gamma_f,
	\end{equation} where $d_f$ and $\gamma_f$ are defined respectively by~\eqref{df} and~\eqref{gf}.
\end{theorem}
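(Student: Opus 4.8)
The plan is to set $K:=\bar B_{\mathbb R^n}(0,1)$ and to apply the Hartman-Stampacchia Theorem (Theorem~\ref{hs_thm}) to the continuous operator $f$ on the nonempty compact convex set $K$. This yields a point $\bar x\in K$ satisfying $\langle f(\bar x),\,y-\bar x\rangle\geq 0$ for all $y\in K$. The entire argument then consists in squeezing enough geometric information about $\bar x$ out of this single variational inequality, used together with the nonvanishing hypothesis $f(\bar x)\neq 0$.

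First I would verify that $\bar x$ must lie on the boundary $\partial K$, i.e.\ $\|\bar x\|=1$. Indeed, if $\|\bar x\|<1$, then for every $v\in\mathbb R^n$ the point $\bar x+tv$ belongs to $K$ whenever $t>0$ is small enough; substituting $y=\bar x+tv$ into the variational inequality and dividing by $t$ gives $\langle f(\bar x),v\rangle\geq 0$, and replacing $v$ by $-v$ forces $\langle f(\bar x),v\rangle=0$ for all $v$, hence $f(\bar x)=0$, contradicting nonvanishing. So $\|\bar x\|=1$. Next, since $f(\bar x)\neq 0$, the unit vector $y_0:=-f(\bar x)/\|f(\bar x)\|$ lies in $K$; plugging $y=y_0$ into the variational inequality gives $\langle f(\bar x),\bar x\rangle\leq -\|f(\bar x)\|$, while the Cauchy-Schwarz inequality together with $\|\bar x\|=1$ gives the reverse inequality $\langle f(\bar x),\bar x\rangle\geq -\|f(\bar x)\|$. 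Hence $\langle f(\bar x),\bar x\rangle=-\|f(\bar x)\|$, meaning that $f(\bar x)$ points in the direction opposite to~$\bar x$. Expanding $\|f(\bar x)-\bar x\|^2=\|f(\bar x)\|^2-2\langle f(\bar x),\bar x\rangle+\|\bar x\|^2=(\|f(\bar x)\|+1)^2$ then shows that the displacement of $f$ at $\bar x$ is exactly $1+\|f(\bar x)\|$. Consequently $d_f\geq\|f(\bar x)-\bar x\|=1+\|f(\bar x)\|\geq 1+\inf_{x\in K}\|f(x)\|=\gamma_f$, which is the desired inequality~\eqref{inq1}.

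The argument is short, and its only genuinely substantive point — presumably the observation of Professor Ricceri referred to in the text — is the recognition that the Hartman-Stampacchia solution $\bar x$ is forced onto the unit sphere with $f(\bar x)$ radially \emph{inward}; this is what turns the abstract existence statement into the exact identity $\|f(\bar x)-\bar x\|=1+\|f(\bar x)\|$. I do not expect any further obstacle: the remaining steps are the two elementary case distinctions on $\|\bar x\|$ and a one-line computation. The only thing to keep track of carefully is that the nonvanishing hypothesis is used twice — once to rule out interior $\bar x$, and once to make sense of the unit vector $y_0$ — which already signals why the statement can fail without it.
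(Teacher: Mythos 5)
Your proof is correct and rests on the same key ingredients as the paper's: the Hartman--Stampacchia point $\bar x$ is forced onto the unit sphere with $f(\bar x)$ antiparallel to $\bar x$, i.e.\ $-\langle f(\bar x),\bar x\rangle=\|f(\bar x)\|$. The only difference is organizational --- the paper argues by contradiction, deriving the necessary condition $-\langle f(x),x\rangle<\|f(x)\|$ on the sphere from the assumed strict inequality and then violating it at $\bar x$, whereas your direct version additionally records the exact identity $\|f(\bar x)-\bar x\|=1+\|f(\bar x)\|$ at that point.
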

\begin{proof} Suppose to the contrary that there exists a nonvanishing continuous function $f:\bar B_{\mathbb R^n}(0,1)\to\R^n$ such that~\eqref{inq1} fails, i.e., the strict inequality~\eqref{ine1} is valid. 
Then, for any $x\in \bar B_{\mathbb R^n}(0,1)$ one has
	$$\|f(x)-x\| \leq \sup_{z\in\bar B_{\mathbb R^n}(0,1)}\|f(z)-z\| < 1+ \inf_{z\in\bar B_{\mathbb R^n}(0,1)} \|f(z)\|\leq  1+  \|f(x)\|.$$
This yields
	$$\|f(x)-x\|^2 < (1+  \|f(x)\|)^2\quad\mbox{\rm for any}\ 	x\in \bar B_{\mathbb R^n}(0,1)$$
	or, equivalently,
	$$- 2\langle f(x),x\rangle +\|x\|^2 < 1+2\|f(x)\| \quad\mbox{\rm for any}\ 	x\in \bar B_{\mathbb R^n}(0,1).$$
Therefore, we have for all $x\in\partial \bar B_{\mathbb R^n}(0,1)$ that
\begin{equation}\label{inq2}
- \langle f(x),x\rangle < \|f(x)\|.
\end{equation} Thus, the fulfillment of the strict inequality~\eqref{inq2} for every $x\in\partial \bar B_{\mathbb R^n}(0,1)$ is \textit{a necessary condition} for having ~\eqref{ine1}\footnote{This important observation was shown to us by Professor Biagio Ricceri.}.

Now, applying Theorem~\ref{hs_thm} to the variational inequality defined by the nonempty compact convex constraint set $K:=\bar B_{\mathbb R^n}(0,1)$ and the operator $f$, we can find a vector $\ox\in  \bar B_{\mathbb R^n}(0,1)$ such that
\begin{equation}\label{VI_f_1}
	\langle f(\bar x), y-\bar x\rangle \geq 0\quad \mbox{\rm for all}\ y\in \bar B_{\mathbb R^n}(0,1).
\end{equation}
If $\ox \in B_{\mathbb R^n}(0,1)={\rm int}\, K$, then from~\eqref{VI_f_1} we can easily deduce that $f(x) = 0$, which contradicts the condition saying that $f$ is nonvanishing on $K$. So, one must have $\ox\in\partial \bar B_{\mathbb R^n}(0,1)$. Combining this with~\eqref{VI_f_1}, we get the representation  $f(\bar x)=-\lambda \bar x$ for some $\lambda\geq 0$. The situation where $\lambda=0$ is excluded, as $f$ is nonvanishing on $K$. Hence, $f(\bar x)=-\lambda \bar x$ with $\lambda>0$. Therefore, we have
$$-\langle f(\ox ),\ox \rangle = \lambda =\| f(\ox)\|,$$
which is a contradiction to~\eqref{inq2}. 

We have thus shown that~\eqref{inq1} holds and completed the proof. $\hfill\Box$
\end{proof}

\begin{remark} {\rm The assumption on the nonvanishing property of  $f$ cannot be dropped in the formulation of Theorem~\ref{thm_1a}. To see that the inequality~\eqref{inq1} may not hold if the assumption is violated, it suffices to choose $f(x)=x$ for all $\bar B_{\mathbb R^n}(0,1)$ with $n\geq 1$ and observe that $d_f=0$, while $\gamma_f=1$.}
\end{remark}

\begin{remark} {\rm For many vanishing functions, the inequality~\eqref{inq1} may hold and, in some cases, it even holds as an equality. As an example, let us  choose $n=2$, $$f_\alpha(x)=\big(x_1\cos\alpha-x_2\sin\alpha,\, x_1\sin\alpha +x_2\cos\alpha\big)$$ for all $x=(x_1,x_2)\in\bar B_{\mathbb R^2}(0,1)$, where $\alpha\in [-\pi,\pi]$ is a parameter. Then, $f_\alpha:\bar B_{\mathbb R^2}(0,1)\to \bar B_{\mathbb R^2}(0,1)$ is the rotation around 0 of angle $\alpha$. Clearly, $\gamma_{f_\alpha}=1$ for all $\alpha\in [-\pi,\pi]$. 
		
For $\alpha=\pi/3$ and $\alpha=-\pi/3$, it is easily verified that $d_{f_\alpha}=\gamma_{f_\alpha}$. So, with $f=f_\alpha$,~\eqref{inq1} holds as an equality. 

For every $\alpha\in [-\pi,\pi]\setminus [-\pi/3,\pi/3]$, one has $d_{f_\alpha}>\gamma_{f_\alpha}$. Thus,~\eqref{inq1} holds as a strict inequality for $f=f_\alpha$. 

For every $\alpha\in (-\pi/3,\pi/3)$, one has $d_{f_\alpha}<\gamma_{f_\alpha}$. This means that the inequality~\eqref{inq1} fails to hold for $f=f_\alpha$.}
\end{remark}

The inequality~\eqref{inq1} tells us that the number $\gamma_f$, which can be computed easily in many situations, is a lower estimate for $d_f$. Various extensions and the sharpness of the estimate~\eqref{inq1} will be the subject of our considerations in the forthcoming section.

\section{Lower Estimates for the Maximum Displacements}\label{Sect-3}

 \subsection{Lower estimates for functions defined on a class of convex subsets of~$\R^n$}
 
As Theorem~\ref{hs_thm} can be applied to any nonempty compact convex subset of $\R^n$, it is of interest to extend Theorem~\ref{thm_1a} to the case where the closed ball is replaced by an arbitrary nonempty compact convex subset of~$\R^n$ containing~$0$ in its interior.

\begin{theorem}\label{thm_3.1}
	Let $K$ be a nonempty compact convex subset of $\R^n$ with $0\in \sint K$ and let $f:K\to\R^n$ be a nonvanishing and continuous function. Then, one has
	\begin{equation}\label{inq5}
		\sup_{x\in K}\|f(x)-x\| \geq r+ \inf_{x\in K} \|f(x)\|,
	\end{equation}
	where 
	\begin{equation}\label{def_r}
		r:= \sup \, \{\rho >0 \mid \bar B_{\mathbb R^n}(0,\rho)\subset K\}.
	\end{equation}
\end{theorem}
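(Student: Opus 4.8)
The plan is to run the argument of Theorem~\ref{thm_1a} almost verbatim, the only structural change being that the Hartman--Stampacchia Theorem will now be applied directly to the set $K$ rather than to a ball. First I would record two elementary facts that make the quantity $r$ in~\eqref{def_r} meaningful and useful: since $0\in\sint K$ one has $r>0$; and since $K$ is compact, hence closed, the inclusion $\bar B_{\mathbb R^n}(0,\rho)\subset K$ for all $\rho<r$ passes to the limit to give $\bar B_{\mathbb R^n}(0,r)\subset K$. Thus $K$ contains the closed ball of radius $r$, which will serve as a reservoir of admissible test points for the variational inequality.

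Next, argue by contradiction and suppose that~\eqref{inq5} fails, i.e. $\sup_{x\in K}\|f(x)-x\|<r+\inf_{x\in K}\|f(x)\|$. As in the proof of Theorem~\ref{thm_1a}, for every $x\in K$ this gives $\|f(x)-x\|<r+\|f(x)\|$, and squaring yields the pointwise inequality $-2\langle f(x),x\rangle+\|x\|^2<r^2+2r\|f(x)\|$ on all of $K$ --- the analogue of the necessary condition used to obtain~\eqref{inq2}, except that here I keep it valid on the whole of $K$ rather than on a sphere.

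Now apply Theorem~\ref{hs_thm} to the compact convex set $K$ and the operator $f$ to produce a point $\bar x\in K$ with $\langle f(\bar x),y-\bar x\rangle\ge 0$ for all $y\in K$. Since $f$ is nonvanishing, $f(\bar x)\neq 0$, so the vector $y:=-r\,f(\bar x)/\|f(\bar x)\|$ has norm $r$ and hence belongs to $\bar B_{\mathbb R^n}(0,r)\subset K$; inserting it into the variational inequality gives $-\langle f(\bar x),\bar x\rangle\ge r\|f(\bar x)\|$. Substituting this bound into the pointwise inequality evaluated at $\bar x$ forces $\|\bar x\|^2<r^2$, so that $\bar x\in B_{\mathbb R^n}(0,r)\subset\sint K$. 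But for an interior point the variational inequality yields $\langle f(\bar x),v\rangle=0$ for every direction $v$, hence $f(\bar x)=0$, contradicting the nonvanishing hypothesis. This contradiction establishes~\eqref{inq5}.

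I expect the only mildly delicate points to be the limiting argument showing $\bar B_{\mathbb R^n}(0,r)\subset K$ (where closedness of $K$ is essential) and the choice of the test point $-r\,f(\bar x)/\|f(\bar x)\|$; the remaining algebra is identical to that already carried out for Theorem~\ref{thm_1a}. An alternative, perhaps slicker, route is to rescale: set $g(x):=r^{-1}f(rx)$ for $x\in\bar B_{\mathbb R^n}(0,1)$, apply Theorem~\ref{thm_1a} to the nonvanishing continuous function $g$ to obtain $\sup_{\|u\|\le r}\|f(u)-u\|\ge r+\inf_{\|u\|\le r}\|f(u)\|$, and then enlarge the domain from $\bar B_{\mathbb R^n}(0,r)$ to $K$ using that the supremum can only increase and the infimum can only decrease when the domain grows; this yields~\eqref{inq5} as well, though it uses only the values of $f$ on the inner ball.
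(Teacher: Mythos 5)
Your main argument is correct and follows essentially the same route as the paper: argue by contradiction, derive the pointwise inequality $\|f(x)-x\|<r+\|f(x)\|$, square it, and invoke the Hartman--Stampacchia Theorem. The only real difference is in how the contradiction is extracted. The paper applies Theorem~\ref{hs_thm} to the inscribed ball $\bar B_{\mathbb R^n}(0,r)$, concludes that the solution $\bar x$ must lie on $\partial\bar B_{\mathbb R^n}(0,r)$ with $f(\bar x)=-\lambda\bar x$ for some $\lambda>0$, and contradicts the squared inequality restricted to that sphere. You instead apply Theorem~\ref{hs_thm} to $K$ itself, test the variational inequality with $y=-r\,f(\bar x)/\|f(\bar x)\|\in\bar B_{\mathbb R^n}(0,r)\subset K$ to get $-\langle f(\bar x),\bar x\rangle\ge r\|f(\bar x)\|$, and then feed this back into the squared inequality to force $\bar x$ into the interior, where the variational inequality kills $f(\bar x)$. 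Both versions check out; yours avoids the explicit normal-cone representation $f(\bar x)=-\lambda\bar x$ at the cost of one extra step, and you are right to flag (as the paper does not) that $\bar B_{\mathbb R^n}(0,r)\subset K$ requires the closedness of $K$ in the limiting argument. Your alternative rescaling route --- applying Theorem~\ref{thm_1a} to $g(x)=r^{-1}f(rx)$ on the unit ball and then enlarging the domain, noting the supremum grows and the infimum shrinks --- is a genuinely different and cleaner reduction that the paper does not use; it proves the theorem as a corollary of Theorem~\ref{thm_1a} rather than from scratch, using only the values of $f$ on the inscribed ball.
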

  \begin{proof}
  Suppose for the sake of contradiction that~\eqref{inq5} fails. Then, 
  $$\sup_{x\in K}\|f(x)-x\| < r+ \inf_{x\in K} \|f(x)\|.$$
This implies that $$\|f(x)-x\| < r+ \|f(x)\|$$	for all $x\in \bar B_{\mathbb R^n}(0,r)$.	Squaring both sides of this inequality gives
$$\|f(x)\|^2- 2\langle f(x),x\rangle +\|x\|^2 < r^2+2r\|f(x)\| + \|f(x)\|^2$$
for all $x\in \bar B_{\mathbb R^n}(0,r)$. Thus,
\begin{equation}\label{inq4}
	- \langle f(x),x\rangle  < r\|f(x)\| \quad\mbox{\rm for any}\ 	x\in \partial\bar B_{\mathbb R^n}(0,r).
\end{equation}
Applying Theorem~\ref{hs_thm} to the variational inequality associated with the function $f$ and the compact convex set $\bar B_{\mathbb R^n}(0,r)$, we can find a vector $\ox \in \bar B_{\mathbb R^n}(0,r)$ such that
\begin{equation*}\label{VI_f_r}
	\langle f(\bar x), y-\bar x\rangle \geq 0\quad \mbox{\rm for all}\ y\in \bar B_{\mathbb R^n}(0,r).
\end{equation*}
Then, using the nonvanishing property of~$f$ and arguing similarly as in the proof of Theorem~\ref{thm_1a}, we can infer that $\ox\in\partial \bar B_{\mathbb R^n}(0,r)$ and $f(\ox)=-\lambda \ox$ for some $\lambda >0.$ Therefore, $$-\langle f(\ox ),\ox \rangle = \lambda r^2 =r\| f(\ox)\|.$$ This contradicts~\eqref{inq4}. 

We have thus established the inequality~\eqref{inq5}, where the constant $r>0$ is given by~\eqref{def_r}.
$\hfill\Box$
 \end{proof}

 \subsection{Finding sharp estimates}
For any compact convex set $K$ and continuous function $f$ that satisfy the hypotheses of Theorem~\ref{thm_3.1}, the inequality  
\begin{equation}\label{inq6}
	\sup_{x\in K}\|f(x)-x\| \geq \alpha + \inf_{x\in K} \|f(x)\|
\end{equation}
holds for all $\alpha \leq r$, where $r$ is defined in~\eqref{def_r}. This fact follows from~\eqref{inq5}. It is of interest to determine the largest possible value of $\alpha$ for which~\eqref{inq6} remains valid. The next proposition shows that for a given nonempty compact convex set $K$, the value $\alpha$ is bounded above by $\sup\limits_{x\in K} \|x\|$.

  \begin{proposition}\label{prop_4}
	Let $K$ be a nonempty compact convex subset of $\R^n$ with $0\in \sint K$. Suppose that there exists  $\alpha >0$ such that  for any  nonvanishing and continuous function $f:K\to\R^n$, the inequality~\eqref{inq6} holds. Then, we have $\alpha \leq r_1$, where 
\begin{equation}\label{def_r1}
r_1 :=\sup\limits_{x\in K} \|x\|
\end{equation}
  \end{proposition}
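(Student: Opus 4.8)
The plan is to prove the contrapositive: I will exhibit, for each $\alpha > r_1$, a specific nonvanishing continuous function $f\colon K \to \R^n$ for which inequality~\eqref{inq6} fails. Since $\eqref{inq6}$ is required to hold for \emph{all} nonvanishing continuous $f$, producing one counterexample with $\alpha > r_1$ suffices to force $\alpha \le r_1$.

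The natural candidate is a constant function. Take $f \equiv c$ for a fixed nonzero vector $c \in \R^n$; this is trivially continuous and nonvanishing. Then $\inf_{x\in K}\|f(x)\| = \|c\|$, while $\sup_{x\in K}\|f(x)-x\| = \sup_{x\in K}\|c - x\|$. Requirement~\eqref{inq6} for this $f$ reads $\sup_{x\in K}\|c-x\| \ge \alpha + \|c\|$. The idea is to let $\|c\|$ be large and $c$ point in a direction that keeps $\sup_{x\in K}\|c-x\|$ only slightly more than $\|c\| + r_1$. Concretely, I would first handle the case where $K$ contains a point $\oy$ with $\|\oy\| = r_1$ (this exists by compactness, as $x \mapsto \|x\|$ attains its max on $K$). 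Choosing $c = -t\,\oy$ for $t > 0$ large, we get $\|c - \oy\| = (1+t)\|\oy\| = (1+t) r_1 = \|c\| + r_1$, so $\sup_{x\in K}\|c-x\| \ge \|c\| + r_1$; but the reverse bound $\sup_{x\in K}\|c-x\| \le \|c\| + \sup_{x\in K}\|x\| = \|c\| + r_1$ holds by the triangle inequality. Hence $\sup_{x\in K}\|c-x\| = \|c\| + r_1$ exactly, and~\eqref{inq6} would force $\alpha + \|c\| \le \|c\| + r_1$, i.e. $\alpha \le r_1$, contradicting $\alpha > r_1$.

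Thus the key steps, in order, are: (i) invoke compactness of $K$ to get $\oy \in K$ with $\|\oy\| = r_1$; (ii) for an arbitrary $t>0$ set $f \equiv c$ with $c = -t\oy \ne 0$, noting $f$ is nonvanishing continuous; (iii) compute $\inf_{x\in K}\|f(x)\| = t r_1$; (iv) use the triangle inequality for the upper bound $\sup_{x\in K}\|c-x\| \le t r_1 + r_1$ and evaluation at $x = \oy$ for the matching lower bound, obtaining $\sup_{x\in K}\|f(x)-x\| = tr_1 + r_1$; (v) plug into~\eqref{inq6} to get $\alpha \le r_1$.

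I do not anticipate a serious obstacle: the argument is elementary once one thinks of testing with constant functions aligned opposite to a norm-maximizing point of $K$. The only mild subtlety is making sure the chosen $f$ is genuinely nonvanishing, which is immediate since $c \ne 0$ (this is where $r_1 > 0$, guaranteed by $0 \in \sint K$ so that $K \ne \{0\}$, is quietly used — $\oy \ne 0$). One could also note that the equality $\sup_{x\in K}\|c-x\| = \|c\| + r_1$ shows the bound $\alpha \le r_1$ is the exact ceiling imposed by constant functions, which is presumably the point of stating the proposition.
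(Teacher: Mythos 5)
Your argument is correct and complete, but it uses a different test function from the paper's. The paper argues by contradiction: assuming $\alpha>r_1$, it fixes any $x'$ with $\|x'\|=\alpha$ (so $x'\notin K$ by the definition of $r_1$) and tests~\eqref{inq6} with the translation $f(x)=x-x'$. For that choice the displacement $f(x)-x=-x'$ is constant, so $\sup_{x\in K}\|f(x)-x\|=\alpha$ with no computation, while $\inf_{x\in K}\|f(x)\|>0$ follows from compactness and $x'\notin K$; inequality~\eqref{inq6} then reads $\alpha\ge\alpha+(\mbox{a positive number})$, an immediate contradiction. You instead test with a constant function $f\equiv c=-t\bar y$ pointed opposite a norm-maximizer $\bar y\in K$, which makes $\inf_{x\in K}\|f(x)\|=\|c\|$ trivial but requires the exact evaluation $\sup_{x\in K}\|c-x\|=\|c\|+r_1$ (triangle inequality for the upper bound, evaluation at $\bar y$ for the lower bound). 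Both routes are elementary and valid; yours has the small bonus of showing that constant functions alone already pin the ceiling at exactly $r_1$, and it yields $\alpha\le r_1$ directly rather than by contradiction (note that a single value, say $t=1$, already suffices --- the parameter $t$ and the ``large $t$'' language are unnecessary). The paper's choice, on the other hand, needs no norm-maximizing point and no exact computation of a supremum, only the observation that $x'\notin K$ keeps $f$ nonvanishing.
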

  \begin{proof}
  	Suppose to the contrary that  $\alpha > r_1$. Fix any vector $x'\in \R^n$ with $\|x'\| = \alpha$. Then, by~\eqref{def_r1} we have $x'\notin K$. Define $f(x)= x-x'$ for all $x\in K$. Clearly, $f$ is continuous on $K$ and $f(x)\neq 0$ for all $x\in K$. Hence, the compactness of $K$ implies that $\inf\limits_{x\in K} \|f(x)\|=\min\limits_{x\in K}\|f(x)\|>0$. Therefore, we get
  	  	$$	\sup_{x\in K}\|f(x)-x\| = \sup_{x\in K}\|x-x'-x\| =\|x'\|  =\alpha <  \alpha+ \inf_{x\in K} \|f(x)\|.$$
 This contradicts to~\eqref{inq6}. Thus, one has $\alpha \leq r_1$.
 $\hfill\Box$
  \end{proof}
  
As a result of Theorem~\ref{thm_3.1} and Proposition~\ref{prop_4}, the largest possible value of $\alpha$ must belong to the interval $[r,r_1]$, where $r$ and $r_1$ are defined respectively by~\eqref{def_r} and~\eqref{def_r1}.

\begin{proposition}\label{prop_5}
	Let $K$ be a nonempty compact convex subset of $\R^n$ with $0\in \sint K$. Let $\alpha \in[r,r_1]$ be such that the inequality~\eqref{inq6} holds for any nonvanishing and continuous function $f:K\to\R^n$. Then, one must have $\alpha =r$.
\end{proposition}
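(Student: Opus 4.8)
The plan is to establish the reverse inequality $\alpha\le r$; since the hypothesis already gives $\alpha\ge r$, this forces $\alpha=r$. I would argue by contradiction: assuming $\alpha>r$, I would exhibit a \emph{single} nonvanishing continuous function $f\colon K\to\R^n$ for which~\eqref{inq6} fails, contradicting the standing assumption on $\alpha$. The construction is a variant of the one used in the proof of Proposition~\ref{prop_4}: there one subtracts a point of norm exactly $\alpha$ lying outside $K$, which need not exist once $\alpha\le r_1$; here one instead subtracts a point lying just outside $K$ yet of norm strictly less than $\alpha$, whose existence is guaranteed precisely by $\alpha>r$.

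Concretely, I would fix any $\alpha'$ with $r<\alpha'<\alpha$. By the definition~\eqref{def_r} of $r$, the inclusion $\bar B_{\mathbb R^n}(0,\alpha')\subset K$ cannot hold (otherwise $\alpha'$ would belong to the set appearing in~\eqref{def_r}, whence $\alpha'\le r$, a contradiction). Hence there is a vector $w\in\R^n$ with $\|w\|\le\alpha'$ and $w\notin K$. Define $f(x):=x-w$ for all $x\in K$. This $f$ is continuous, and it is nonvanishing on $K$ because $w\notin K$. Since $f(x)-x=-w$ for every $x\in K$, one has $\sup_{x\in K}\|f(x)-x\|=\|w\|\le\alpha'<\alpha$; and since $K$ is compact and $w\notin K$, the infimum $\inf_{x\in K}\|f(x)\|=\mbox{\rm dist}(w,K)$ is attained and strictly positive.

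Combining these facts, $\sup_{x\in K}\|f(x)-x\|=\|w\|<\alpha<\alpha+\inf_{x\in K}\|f(x)\|$, which contradicts the assumed validity of~\eqref{inq6} for this $f$. Therefore $\alpha\le r$, and with $\alpha\ge r$ we conclude $\alpha=r$. The only steps requiring (minor) care are the two elementary observations that the maximum displacement of a constant translation equals the length of the shift vector, and that a continuous function omitting the value $0$ on a compact set has a strictly positive infimum of its norm; neither poses a genuine obstacle. The real content is simply the remark that $\alpha>r$ produces a point of $K^{c}$ of norm below $\alpha$, which is exactly what drives the counterexample.
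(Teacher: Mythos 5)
Your proof is correct and follows essentially the same strategy as the paper's: assuming $\alpha>r$, one refutes~\eqref{inq6} by applying it to a translation $f(x)=x-c$ for a suitable point $c\notin K$, whose maximum displacement is $\|c\|$ and whose norm has a strictly positive infimum. The only difference is the choice of $c$: the paper takes $c=(1+\varepsilon)x'$ with $x'$ a boundary point of minimal norm and lets $\varepsilon\downarrow 0$, whereas you extract a single $w\notin K$ with $\|w\|\le\alpha'<\alpha$ directly from the definition~\eqref{def_r} of $r$, which avoids both the limiting argument and the identification $\min_{x\in\partial K}\|x\|=r$.
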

\begin{proof}
Suppose, to the contrary, that $\alpha > r$. Let $x' \in\partial K$ be such that $\min\limits_{x\in \partial K}\|x\|=\|x'\|$. Then, from~\eqref{def_r} it follows that $\|x'\|=r$. 

Take any $\varepsilon>0$ and define $f(x)= x - (1+\varepsilon)x'$ for all $x\in K$. We see that $f$ is continuous on~$K$. The function $f$ is nonvanishing on $K$. Indeed, if there is some $\ox\in K$ such that $f(\ox)=0$, then $\ox=(1+\varepsilon)x'$. This gives
$$x'= \dfrac{1}{1+\varepsilon}\ox= \dfrac{1}{1+\varepsilon}\ox + \dfrac{\varepsilon}{1+\varepsilon}0.$$ 
Since $\varepsilon >0$, it follows that $x'\in (0,\ox)\subset \sint K$, which contradicts to the fact that $x'\in \partial K$. Thus, $f(x)\neq 0$ for all $x\in K$.

Since $f$ is nonvanishing and continuous on~$K$, the inequality~\eqref{inq6} is fulfilled by our assumption on~$\alpha$. We have
$$\sup_{x\in K}\|f(x)-x\| =\sup_{x\in K}\|x-(1+\varepsilon)x'-x\| =(1+\varepsilon)\|x'\| =(1+\varepsilon)r. $$
Combining this with~\eqref{inq6} yields
$$(1+\varepsilon)r\geq \alpha + \inf_{x\in K} \|f(x)\| > \alpha.$$
Hence, $(1+\varepsilon)r  >\alpha$ for all $\varepsilon>0$. Letting $\varepsilon\to 0$, we get $r\geq \alpha$, which leads to a contradiction.
$\hfill\Box$
\end{proof}

Combining Propositions~\ref{prop_4} and \ref{prop_5}  with Theorem~\ref{thm_3.1} gives us the next theorem.

\begin{theorem}\label{thm_1}
		Let $K$ be a nonempty compact convex subset of $\R^n$ with $0\in \sint K$. Then, the inequality~\eqref{inq6} holds for any nonvanishing and continuous function $f:K\to\R^n$ if and only if $\alpha \leq r$.
\end{theorem}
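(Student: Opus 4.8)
The plan is to obtain Theorem~\ref{thm_1} by assembling the three results already proved: Theorem~\ref{thm_3.1}, Proposition~\ref{prop_4}, and Proposition~\ref{prop_5}. No new geometric construction is needed; the argument is just a matter of chaining the established inequalities together and keeping track of the admissible range of the parameter $\alpha$.

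For the sufficiency part, suppose $\alpha\le r$ and let $f:K\to\R^n$ be any nonvanishing continuous function. I would apply Theorem~\ref{thm_3.1} and then use $\alpha\le r$ together with $\inf_{x\in K}\|f(x)\|\ge 0$ to write
$$\sup_{x\in K}\|f(x)-x\|\ \ge\ r+\inf_{x\in K}\|f(x)\|\ \ge\ \alpha+\inf_{x\in K}\|f(x)\|,$$
which is exactly~\eqref{inq6}. This is precisely the observation recorded right after Theorem~\ref{thm_3.1}.

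For the necessity part, assume that~\eqref{inq6} holds for every nonvanishing continuous $f:K\to\R^n$, and argue by cases on $\alpha$. If $\alpha\le 0$ there is nothing to prove, since $0\in\sint K$ forces $r>0$, hence $\alpha\le r$. If $\alpha>0$, the standing hypothesis lets me invoke Proposition~\ref{prop_4}, giving $\alpha\le r_1$; thus $\alpha\in(0,r_1]$. If in addition $\alpha\ge r$, then $\alpha\in[r,r_1]$, and Proposition~\ref{prop_5} forces $\alpha=r$. In every case $\alpha\le r$, which completes the proof.

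I do not expect a genuine obstacle here, since all the real work lives in Theorem~\ref{thm_3.1} and Propositions~\ref{prop_4}--\ref{prop_5}. The only points requiring a little care are that $\alpha$ is a priori of arbitrary sign, so the trivial case $\alpha\le 0$ must be disposed of separately before Propositions~\ref{prop_4} and~\ref{prop_5} (which presuppose $\alpha>0$) can be applied, and that one must verify that the interval $(0,r_1]$ delivered by Proposition~\ref{prop_4} indeed meets the hypothesis interval $[r,r_1]$ of Proposition~\ref{prop_5}. Once these ranges are lined up, the equivalence drops out with no further computation.
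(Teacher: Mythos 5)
Your proposal is correct and is essentially the paper's own argument: the paper proves Theorem~\ref{thm_1} simply by combining Theorem~\ref{thm_3.1} with Propositions~\ref{prop_4} and~\ref{prop_5}, exactly as you do. Your extra care in disposing of the trivial case $\alpha\le 0$ and in checking that Proposition~\ref{prop_4} places $\alpha$ in the interval $[r,r_1]$ required by Proposition~\ref{prop_5} is a welcome (and harmless) refinement of the paper's one-line deduction.
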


\subsection{The effect of changing the norm}

Until now, we have focused on the maximum displacements of nonvanishing continuous vector-valued functions when $\R^n$ is equipped with the Euclidean norm. Since all norms in a finite-dimensional vector space are equivalent, one can wonder how the estimates like the one in~\eqref{inq5} may change when considering a different norm instead of the Euclidean norm.

Let  $\|\cdot\|_*$ be an arbitrary norm in~$\R^n$. Then, there exist positive constants~$\theta_1$ and~$\theta_2$ such that
\begin{equation}\label{eqv_norm}
\theta_1 \| x\|_* \leq \|x\| \leq \theta_2\|x\|_*
\end{equation}  for all $x\in \R^n$. By the homogeneous property of norms, the inequalities in~\eqref{eqv_norm} hold for all $x\in \R^n$ if and only if they hold for all $x\in \partial\bar B_{\mathbb R^n}(0,1)$. Thus, by setting
\begin{equation}\label{theta*_1_2}
	\theta^*_1= \min\limits_{x\in \partial \bar B_{\mathbb R^n}(0,1)} \dfrac{1}{\|x\|_*}\quad\mbox{\rm and}\quad \theta^*_2= \max\limits_{x\in \partial \bar B_{\mathbb R^n}(0,1)} \dfrac{1}{\|x\|_*},
\end{equation}
we see that  the inequalities in~\eqref{eqv_norm} holds for all $x\in \R^n$ if and only if $\theta_1\leq\theta^*_1$ and $\theta_2\geq\theta^*_2$. 

The counterpart of estimate~\eqref{inq5} in the norm $\|\cdot\|_*$ can be expressed as follows.

\begin{theorem}\label{thm_6}
	Let $K$ be a nonempty compact convex subset of $\R^n$ with $0\in \sint K$ and let $f:K\to\R^n$ be a nonvanishing continuous function. Then, one has
	\begin{equation}\label{inq7}
		\sup_{x\in K}\|f(x)-x\|_* \geq r_*+ \dfrac{\theta^*_1}{\theta^*_2}\inf_{x\in K} \|f(x)\|_*
	\end{equation}
	with $r_*:= \dfrac{r}{\theta^*_2}$, where $r$ is defined in~\eqref{def_r}.
\end{theorem}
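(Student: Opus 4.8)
The plan is to deduce Theorem~\ref{thm_6} directly from the Euclidean estimate of Theorem~\ref{thm_3.1} by pushing the equivalence of norms through the supremum and the infimum. First I would record, using the discussion preceding the theorem, that the constants $\theta^*_1$ and $\theta^*_2$ defined in~\eqref{theta*_1_2} are admissible in~\eqref{eqv_norm}; that is,
$$\theta^*_1\,\|x\|_* \le \|x\| \le \theta^*_2\,\|x\|_* \qquad\mbox{\rm for all }x\in\R^n .$$
Both numbers are finite and strictly positive, since $1/\|\cdot\|_*$ is a positive continuous function on the compact sphere $\partial\bar B_{\mathbb R^n}(0,1)$; this is exactly what makes~\eqref{theta*_1_2} meaningful.

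Next I would transfer the two terms appearing in~\eqref{inq5}. From the right-hand inequality above, $\|f(x)-x\|\le\theta^*_2\|f(x)-x\|_*$ for every $x\in K$, and taking the supremum over $x\in K$ gives $\sup_{x\in K}\|f(x)-x\|\le\theta^*_2\sup_{x\in K}\|f(x)-x\|_*$. From the left-hand inequality, $\|f(x)\|\ge\theta^*_1\|f(x)\|_*$ for every $x\in K$, and taking the infimum gives $\inf_{x\in K}\|f(x)\|\ge\theta^*_1\inf_{x\in K}\|f(x)\|_*$. Both steps are immediate from monotonicity of $\sup$ and $\inf$.

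Finally I would invoke Theorem~\ref{thm_3.1}, which applies because $K$ is a nonempty compact convex subset of $\R^n$ with $0\in\sint K$ and $f$ is nonvanishing and continuous, to obtain $\sup_{x\in K}\|f(x)-x\|\ge r+\inf_{x\in K}\|f(x)\|$ with $r$ given by~\eqref{def_r}. Chaining this with the two displayed estimates yields
$$\theta^*_2\sup_{x\in K}\|f(x)-x\|_* \;\ge\; \sup_{x\in K}\|f(x)-x\| \;\ge\; r+\inf_{x\in K}\|f(x)\| \;\ge\; r+\theta^*_1\inf_{x\in K}\|f(x)\|_* ,$$
and dividing through by $\theta^*_2>0$ gives precisely~\eqref{inq7} with $r_*=r/\theta^*_2$. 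There is essentially no genuine obstacle in this argument; the only points deserving a word of care are the positivity and finiteness of $\theta^*_1,\theta^*_2$ (so that the division and the passage through $\sup$/$\inf$ are legitimate) and the verification that the hypotheses of Theorem~\ref{thm_3.1} are literally unchanged, since they are norm-independent statements about $K$ and $f$.
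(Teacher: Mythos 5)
Your proposal is correct and follows essentially the same route as the paper's proof: transfer the supremum and infimum via the constants $\theta^*_1,\theta^*_2$ from~\eqref{theta*_1_2}, apply Theorem~\ref{thm_3.1} in the Euclidean norm, chain the inequalities, and divide by $\theta^*_2$. The only addition is your explicit remark on the positivity and finiteness of $\theta^*_1,\theta^*_2$, which the paper leaves implicit.
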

\begin{proof}
By~\eqref{theta*_1_2} and the remark following it, we have
$$\|f(x)-x\| \leq \theta^*_2 \|f(x)-x\|_*\quad\mbox{\rm for all}\ x\in \R^n.$$
Thus, $$\sup_{x\in K}\|f(x)-x\| \leq \theta^*_2\, \sup_{x\in K}\|f(x)-x\|_*.$$ Similarly, by~\eqref{theta*_1_2} and the subsequent remark, $$\theta^*_1  \inf_{x\in K} \|f(x)\|_*\leq  \inf_{x\in K} \|f(x)\|.$$ Therefore, by using~\eqref{inq5} we obtain
$$ \theta^*_2 \sup_{x\in K}\|f(x)-x\|_*\geq \sup_{x\in K}\|f(x)-x\| \geq r + \inf_{x\in K} \|f(x)\| \geq r+ \theta^*_1  \inf_{x\in K} \|f(x)\|_*.$$
This implies that $$\sup_{x\in K}\|f(x)-x\|_*\geq \dfrac{r}{\theta^*_2} +\dfrac{\theta^*_1}{\theta^*_2} \inf_{x\in K} \|f(x)\|_*.$$ Thus,~\eqref{inq7} is valid.
$\hfill\Box$
\end{proof}

From Theorem~\ref{thm_1} and the definitions of $\theta^*_1$ and $\theta^*_2$ it follows that~\eqref{inq7} is the best lower estimate for the maximum displacements of nonvanishing continuous vector-valued functions defined on $K$, if $\R^n$ is equipped with the norm $\|\cdot\|_*$.

\begin{corollary}\label{col_1}
	Let~ $\|\cdot\|_*$ be a norm in~$\R^n$ and $K$ be a nonempty compact convex subset of $\R^n$ with $0\in \sint K$. Then, there exists a constant $\nu>0$ depending only on the norm $\|\cdot\|_*$ such that the inequality 
	\begin{equation}\label{inq7-nu}
		\sup_{x\in K}\|f(x)-x\|_* \geq \nu \left(r+ \inf_{x\in K} \|f(x)\|_*\right),
	\end{equation}
	where $r$ is defined by~\eqref{def_r}, holds for any nonvanishing continuous function $f:K\to\R^n$.
\end{corollary}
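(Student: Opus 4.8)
The plan is to extract the corollary directly from Theorem~\ref{thm_6} by absorbing the two explicit constants appearing there into a single constant $\nu$ that depends only on the norm $\|\cdot\|_*$. Recall that Theorem~\ref{thm_6} gives
\[
\sup_{x\in K}\|f(x)-x\|_* \;\geq\; \frac{r}{\theta^*_2} + \frac{\theta^*_1}{\theta^*_2}\inf_{x\in K}\|f(x)\|_*,
\]
where $\theta^*_1,\theta^*_2$ are the quantities in~\eqref{theta*_1_2}, both strictly positive (they are the min and max of a continuous positive function over the compact set $\partial\bar B_{\mathbb R^n}(0,1)$), and $0<\theta^*_1\leq\theta^*_2$ since $\|\cdot\|_*$ is itself a norm comparable to the Euclidean one.

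First I would set $\nu := \min\{\tfrac{1}{\theta^*_2},\ \tfrac{\theta^*_1}{\theta^*_2}\} = \tfrac{\theta^*_1}{\theta^*_2} > 0$ (using $\theta^*_1\leq\theta^*_2$ so the second term is the smaller, hence in fact $\nu=\theta^*_1/\theta^*_2$). Crucially, both $\theta^*_1$ and $\theta^*_2$ are determined solely by the norm $\|\cdot\|_*$ (through~\eqref{theta*_1_2}) and not by $K$ or $f$, so $\nu$ has the required dependence. Then I would bound the right-hand side of~\eqref{inq7} from below: since $r\geq 0$ and $\inf_{x\in K}\|f(x)\|_*\geq 0$, and since $\tfrac{1}{\theta^*_2}\geq\nu$ and $\tfrac{\theta^*_1}{\theta^*_2}\geq\nu$ (the latter with equality), we get
\[
\frac{r}{\theta^*_2} + \frac{\theta^*_1}{\theta^*_2}\inf_{x\in K}\|f(x)\|_* \;\geq\; \nu r + \nu\inf_{x\in K}\|f(x)\|_* \;=\; \nu\!\left(r+\inf_{x\in K}\|f(x)\|_*\right),
\]
which chained with~\eqref{inq7} yields~\eqref{inq7-nu}.

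There is essentially no obstacle here: the corollary is a cosmetic repackaging of Theorem~\ref{thm_6} into a cleaner qualitative statement, trading the two sharp constants for one (necessarily non-sharp) constant that makes the structural form $\sup\|f(x)-x\|_* \geq \nu(r + \inf\|f(x)\|_*)$ transparent. The only point worth stating carefully is the positivity and $K$-independence of $\theta^*_1,\theta^*_2$ — both follow immediately from~\eqref{theta*_1_2}, the equivalence of norms on $\R^n$, and compactness of the Euclidean unit sphere — after which the inequality manipulation is routine. Consequently I would keep the proof to just a few lines.

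\begin{proof}
By~\eqref{theta*_1_2}, the numbers $\theta^*_1$ and $\theta^*_2$ are the minimum and maximum, respectively, of the continuous strictly positive function $x\mapsto 1/\|x\|_*$ over the compact set $\partial\bar B_{\mathbb R^n}(0,1)$; hence $0<\theta^*_1\leq\theta^*_2$, and these constants depend only on the norm $\|\cdot\|_*$. Set
\[
\nu:=\frac{\theta^*_1}{\theta^*_2}>0.
\]
Let $K$ be any nonempty compact convex subset of $\R^n$ with $0\in\sint K$ and let $f:K\to\R^n$ be any nonvanishing continuous function. By Theorem~\ref{thm_6},
\[
\sup_{x\in K}\|f(x)-x\|_* \;\geq\; \frac{r}{\theta^*_2} + \frac{\theta^*_1}{\theta^*_2}\inf_{x\in K}\|f(x)\|_*,
\]
where $r$ is defined by~\eqref{def_r}. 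Since $\theta^*_1\leq\theta^*_2$, we have $\dfrac{1}{\theta^*_2}\geq\dfrac{\theta^*_1}{\theta^*_2}=\nu$; moreover $r\geq 0$ and $\inf_{x\in K}\|f(x)\|_*\geq 0$. Therefore
\[
\frac{r}{\theta^*_2} + \frac{\theta^*_1}{\theta^*_2}\inf_{x\in K}\|f(x)\|_* \;\geq\; \nu\,r + \nu\inf_{x\in K}\|f(x)\|_* \;=\; \nu\left(r+\inf_{x\in K}\|f(x)\|_*\right).
\]
Combining the last two displays gives~\eqref{inq7-nu}. As $\nu$ depends only on $\|\cdot\|_*$, the proof is complete.
$\hfill\Box$
\end{proof}
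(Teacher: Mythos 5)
Your overall strategy is the paper's: derive the corollary from Theorem~\ref{thm_6} by replacing the two coefficients $\tfrac{1}{\theta^*_2}$ and $\tfrac{\theta^*_1}{\theta^*_2}$ by their minimum. But your evaluation of that minimum is wrong, and with your choice of $\nu$ the conclusion is actually false. You claim that $\theta^*_1\leq\theta^*_2$ implies $\tfrac{1}{\theta^*_2}\geq\tfrac{\theta^*_1}{\theta^*_2}$; the latter inequality is equivalent to $\theta^*_1\leq 1$, which is not a consequence of $\theta^*_1\leq\theta^*_2$ and need not hold, since there is no normalization of $\|\cdot\|_*$ relative to the Euclidean norm. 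For instance, if $\|x\|_*=\|x\|/10$, then $\theta^*_1=\theta^*_2=10$, so your $\nu=\theta^*_1/\theta^*_2=1$, and~\eqref{inq7-nu} would assert $\sup_{x\in K}\|f(x)-x\|_*\geq r+\inf_{x\in K}\|f(x)\|_*$. Taking $K=\bar B_{\mathbb R^n}(0,1)$ (so $r=1$) and the constant nonvanishing map $f\equiv(2,0,\dots,0)$, the left side is $3/10$ while the right side is $1+2/10$, so the inequality fails. The step
\[
\frac{r}{\theta^*_2}\;\geq\;\nu\,r
\]
in your final display is exactly where the argument breaks when $\theta^*_1>1$.

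The repair is immediate and is what the paper does: set $\nu:=\min\bigl\{\tfrac{1}{\theta^*_2},\,\tfrac{\theta^*_1}{\theta^*_2}\bigr\}$ without attempting to identify which term is smaller. Since both coefficients in~\eqref{inq7} are then $\geq\nu$ and both $r$ and $\inf_{x\in K}\|f(x)\|_*$ are nonnegative, the bound
\[
\frac{r}{\theta^*_2}+\frac{\theta^*_1}{\theta^*_2}\inf_{x\in K}\|f(x)\|_*\;\geq\;\nu\left(r+\inf_{x\in K}\|f(x)\|_*\right)
\]
follows, and $\nu$ still depends only on $\|\cdot\|_*$. Everything else in your write-up (positivity and $K$-independence of $\theta^*_1,\theta^*_2$, the appeal to Theorem~\ref{thm_6}) is fine.
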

\begin{proof}
	Set $\nu:= \min \left\{\dfrac{1}{\theta^*_2} ,\dfrac{\theta^*_1}{\theta^*_2} \right\}$, where $\theta^*_1$ and $\theta^*_2$ are defined by~\eqref{theta*_1_2}. Given  any nonvanishing continuous function $f:K\to\R^n$, we observe that
	$$ r.\dfrac{1}{\theta^*_2}+ \dfrac{\theta^*_1}{\theta^*_2}\inf_{x\in K} \|f(x)\|_*\geq \left(\min \left\{\dfrac{1}{\theta^*_2} ,\dfrac{\theta^*_1}{\theta^*_2}\right\}\right)\left(r+ \inf_{x\in K} \|f(x)\|_*\right).$$
	So, applying Theorem~\ref{thm_6}, we get~\eqref{inq7-nu} from~\eqref{inq7}.
	$\hfill\Box$
\end{proof}

\begin{corollary}\label{col_2}
 Suppose that $X$ is an $n$-dimensional normed space equipped with a norm $\|\cdot\|_*$ and  $g: X\to \R^n$ is a linear bijective mapping. Then, there exists a number $\nu>0$ satisfying the following property: For any $r>0$, one can find a nonempty convex compact subset $K_r$ of $X$ with $0\in\sint K_r$ and with $\|x\|_*\leq r$ for all $x\in K_r$, such that for any nonvanishing continuous function $f: K_r\to X$ it holds
\begin{equation}\label{eqn_col2}
\sup_{x\in K_r}\|f(x)-x\|_* \geq \nu \left(r+ \inf_{x\in K_r} \|f(x)\|_*\right).
\end{equation}
\end{corollary}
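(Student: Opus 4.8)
The plan is to reduce Corollary~\ref{col_2} to Corollary~\ref{col_1} by pulling the Euclidean structure of~$\R^n$ back to~$X$ through the isomorphism~$g$. First I would define a norm $\|\cdot\|_\star$ on~$\R^n$ by $\|y\|_\star := \|g^{-1}(y)\|_*$; since $g^{-1}$ is linear and bijective and $\|\cdot\|_*$ is a norm, $\|\cdot\|_\star$ is a norm on~$\R^n$, and by construction $\|g(x)\|_\star = \|x\|_*$ for every $x\in X$. Applying Corollary~\ref{col_1} with this norm yields a constant $\nu_0>0$ depending only on $\|\cdot\|_\star$, hence only on~$g$ and $\|\cdot\|_*$.

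Since the constant~$\nu$ has to be fixed before~$r$ enters the argument, I would next set $M := \max\{\|u\|_\star : \|u\|=1\}$ — a finite positive number, because the norm $\|\cdot\|_\star$ is continuous on the compact Euclidean unit sphere — and put $\nu := \min\{\nu_0,\nu_0/M\}>0$, which depends only on~$g$ and $\|\cdot\|_*$. Now, for a given $r>0$, I would choose $K_r := \{x\in X : \|x\|_*\le r\}$. This set is nonempty, convex, and compact (a closed bounded set in a finite-dimensional space), it satisfies $0\in\sint K_r$, and obviously $\|x\|_*\le r$ for all $x\in K_r$, so it is an admissible choice. Its image $K := g(K_r) = \{y\in\R^n : \|y\|_\star\le r\}$ is a nonempty compact convex subset of~$\R^n$ with $0\in\sint K$ (because a linear bijection of finite-dimensional normed spaces is a homeomorphism), and since $\|y\|\le r/M$ forces $\|y\|_\star\le M\|y\|\le r$, the number $r_K := \sup\{\rho>0 : \bar B_{\mathbb R^n}(0,\rho)\subset K\}$ associated to this~$K$ by~\eqref{def_r} satisfies $r_K\ge r/M$.

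Finally, given a nonvanishing continuous function $f:K_r\to X$, I would transfer it to $h := g\circ f\circ g^{-1}:K\to\R^n$, which is continuous and nonvanishing (if $h(y)=0$ then $f(g^{-1}(y))=0$ by injectivity of~$g$). Corollary~\ref{col_1} applied to~$h$ on~$K$ gives $\sup_{y\in K}\|h(y)-y\|_\star \ge \nu_0\big(r_K + \inf_{y\in K}\|h(y)\|_\star\big)$. Substituting $y=g(x)$ and using $\|g(z)\|_\star = \|z\|_*$ together with $g(f(x))-g(x) = g(f(x)-x)$, the left-hand side equals $\sup_{x\in K_r}\|f(x)-x\|_*$ and the infimum equals $\inf_{x\in K_r}\|f(x)\|_*$. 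Writing $I := \inf_{x\in K_r}\|f(x)\|_*\ge 0$, the transferred inequality reads $\sup_{x\in K_r}\|f(x)-x\|_* \ge \nu_0(r_K+I)$, and then $r_K\ge r/M$, $\nu\le\nu_0/M$ and $\nu\le\nu_0$ give $\sup_{x\in K_r}\|f(x)-x\|_* \ge \nu_0(r/M)+\nu_0 I \ge \nu r+\nu I = \nu\big(r+\inf_{x\in K_r}\|f(x)\|_*\big)$, which is~\eqref{eqn_col2}.

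I do not expect a serious obstacle here; the one point that genuinely needs care is that $\nu$ must be selected independently of~$r$, and this works precisely because the chosen $K_r$ scales linearly with~$r$, so that the ratio between the Euclidean inradius of $g(K_r)$ and~$r$ is the fixed constant $1/M$. The rest is bookkeeping: checking that pulling the Euclidean norm back through~$g$ makes the displacement terms $\|f(x)-x\|$ and the norms $\|f(x)\|$ transform isometrically, and that the quantity~\eqref{def_r} of the transformed set is bounded below by a fixed multiple of~$r$.
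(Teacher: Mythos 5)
Your proof is correct and follows essentially the same route as the paper's: conjugate by the linear isomorphism $g$, transfer the norm across $g$ so that the displacement terms move isometrically, invoke the finite-dimensional lower estimate on the image set, and observe that the relevant Euclidean inradius scales linearly with $r$ so that $\nu$ can be fixed independently of $r$. The only (immaterial) differences are that you take $K_r$ to be the $\|\cdot\|_*$-ball in $X$ and push the norm forward to $\R^n$, then cite Corollary~\ref{col_1} directly, whereas the paper takes $K_r$ to be the $g$-preimage of a Euclidean ball and pulls the Euclidean norm back to $X$ before repeating the Corollary~\ref{col_1}-type norm-equivalence argument.
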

\begin{proof}
 Since $g: X\to \R^n$ is a linear bijective mapping, by~\cite[Theorem~1.21]{Rudin_1991} (see also~\cite[Lemma~2.5]{LY_2020}) we can assert that~$g$ is a homeomorphism. Put $r_2 = \dfrac{r \bar{\theta}}{\|g^{-1}\|_* }$, where $\|g^{-1}\|_*:=\sup\limits_{\|y\|=1}\|g^{-1}(y)\|_*$ and $\bar{\theta}:= \min\limits_{x\in \partial \bar B_{\mathbb R^n}(0,1)} \dfrac{1}{\|x\|_*}$. Let $K_r=g^{-1}\left(\bar B_{\mathbb R^n}(0,r_2)\right)$. As $\bar B_{\mathbb R^n}(0,r_2)$ is a compact convex subset of $\R^n$, $K_r$ is a nonempty compact convex set in $X$ with $0\in \sint K_r$. We have $\|y\|_*\leq r$ for every $y\in K_r$.  Indeed, for any $y\in K_r$, we can find $x\in \bar B_{\mathbb R^n}(0,r_2)$ with $y=g^{-1}(x)$. Therefore,
 \begin{equation*}
 	\begin{array}{ll}
 		\|y\|_*=\|g^{-1}(x)\|_*\leq \|g^{-1}\|_*\| x\|_*\leq \dfrac{\|g^{-1}\|_*\| x\|}{\bar{\theta}}\leq \dfrac{r_2\|g^{-1}\|_*}{\bar{\theta}}= r.
 	\end{array}
 \end{equation*}
 Define a function $\|\cdot\|_\# :X\to \R$ by setting $\|y\|_\# =\| g(y)\|$ for all $y\in X.$  As $g$ is a linear bijective mapping, $\|\cdot\|_\#$ is a norm on $X$. Clearly, $\|g^{-1}(x)\|_\#=\|x\|$ for all $x\in \R^n$. Let $f: K_r\to X$ be a nonvanishing continuous function. Then, the mapping $g\circ f\circ g^{-1}:\bar B_{\mathbb R^n}(0,r_2)\to\R^n$ is continuous. In addition, the nonvanishing property of $f$ together with the linearity of $g$ and $g^{-1}$ imply that  $(g\circ f\circ g^{-1}) (x)\neq 0$ for every $x\in \bar B_{\mathbb R^n}(0,r_2)$. Hence, applying Theorem~\ref{thm_3.1} for the nonempty compact convex set $B_{\mathbb R^n}(0,r_2)$ and the function $g\circ f\circ g^{-1}$ yields
\begin{equation}\label{eq_a_prop2}
\sup_{x\in\bar B_{\mathbb R^n}(0,r_2)}\|(g\circ f\circ g^{-1})(x)-x\| \geq r_2+ \inf_{x\in\bar B_{\mathbb R^n}(0,r_2)} \|(g\circ f\circ g^{-1})(x)\|.
\end{equation}
On the other hand, invoking the linearity of~$g$ once again, one has
$$\begin{array}{rcl}
\sup\limits_{x\in \bar B_{\mathbb R^n}(0,r_2)}\|(g\circ f\circ g^{-1})(x)-x\| & = & \sup\limits_{x\in \bar B_{\mathbb R^n}(0,r_2)}\|g^{-1}\Big((g\circ f\circ g^{-1})(x)-x\Big)\|_\#\\ 
 & = & \sup\limits_{x\in \bar B_{\mathbb R^n}(0,r_2)}\|( f\circ g^{-1})(x)-g^{-1}(x)\|_\#\\ & = & \sup\limits_{y \in K_r}\|f(y)-y\|_\#
\end{array}$$
and
$$\inf_{x \in \bar B_{\mathbb R^n}(0,r_2)}\|(g\circ f\circ g^{-1})(x)\|= \inf_{x \in \bar B_{\mathbb R^n}(0,r_2)}\|(f\circ g^{-1})(x)\|_\#= \inf_{y\in K_r}\|f(y)\|_\#.$$
Combining this with~\eqref{eq_a_prop2} gives
\begin{equation}\label{sharp} \sup_{y \in K_r}\|f(y)-y\|_\# \geq r_2+  \inf_{y\in K_r}\|f(y)\|_\#.\end{equation}  Observe that $\|\cdot\|_\#$ depends only on the function $g$. Then, using~\eqref{sharp} and some arguments similar to those in the proofs of Theorem~\ref{thm_6} and Corollary~\ref{col_1} with the Euclidean norm being replaced by $\|\cdot\|_\#$, we can find $\eta>0$ depending only on the function $g$ such that 
$$ \sup_{x\in K_r}\|f(x)-x\|_* \geq \eta \left(r_2+ \inf_{x\in K_r} \|f(x)\|_*\right)=\eta \left(\dfrac{r\bar{\theta}}{\|g^{-1}\|_*}+ \inf_{x\in K_r} \|f(x)\|_*\right).$$
Setting $\nu= \eta\min\left\{\dfrac{\bar{\theta}}{\|g^{-1}\|_*, 1}\right\}$, we obtain~\eqref{eqn_col2} from the last inequality.
$\hfill\Box$
\end{proof}

\section{Other Five Questions and Their Solutions}\label{Sect-4}

In the preceding two sections, we have solved the first question among the six ones proposed by Professor Biagio Ricceri~\cite{Ricceri_2024} and provided several related results. In this section, we will study the remaining five questions. For each question, we will try either to give a complete solution or to provide a near-to-complete partial solution. In doing so, we will establish meaningful properties of nonvanishing continuous vector-valued functions.

\subsection{Question~2}

The next question reminds us of the concept of eigenvalue of linear operators. To be more precise, here we are concerned with approximate eigenvalues in a sense.

\smallskip
\noindent\textbf{Question 2:} \textit{Can one find a continuous function $f:\bar B_{\mathbb R^n}(0,1)\to \mathbb R^n$ with $f(x)\neq 0$ for every $x\in \bar B_{\mathbb R^n}(0,1)$ such that there is a $\mu>0$ satisfying 
	\begin{equation}\label{eigenvalue}
		\|f(x)-\mu x\|\leq \mu\quad \mbox{\rm for all}\ x\in \bar B_{\mathbb R^n}(0,1)?
\end{equation}}

\begin{proposition}\label{prop_7}
There is no function $f$ fulfilling the required conditions.
\end{proposition}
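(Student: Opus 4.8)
The plan is to reduce Question~2 to Theorem~\ref{thm_1a} via a rescaling argument. Arguing by contradiction, I would assume such an $f$ and such a $\mu>0$ exist, and introduce the auxiliary function $g:\bar B_{\mathbb R^n}(0,1)\to\R^n$ defined by $g(x)=\mu^{-1}f(x)$. Since $\mu>0$, the function $g$ inherits continuity and the nonvanishing property from $f$, and dividing~\eqref{eigenvalue} through by $\mu$ turns the hypothesis into $\|g(x)-x\|\le 1$ for all $x\in\bar B_{\mathbb R^n}(0,1)$; equivalently, the maximum displacement satisfies $d_g\le 1$.

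Next I would apply Theorem~\ref{thm_1a} to the nonvanishing continuous function $g$, obtaining $d_g\ge\gamma_g=1+\inf_{x\in\bar B_{\mathbb R^n}(0,1)}\|g(x)\|$. Chaining this with $d_g\le 1$ forces $\inf_{x\in\bar B_{\mathbb R^n}(0,1)}\|g(x)\|\le 0$, and since norms are nonnegative we conclude $\inf_{x\in\bar B_{\mathbb R^n}(0,1)}\|g(x)\|=0$. Finally, using that $\bar B_{\mathbb R^n}(0,1)$ is compact and the map $x\mapsto\|g(x)\|$ is continuous, this infimum is attained at some $x_0\in\bar B_{\mathbb R^n}(0,1)$, so $g(x_0)=0$ and hence $f(x_0)=\mu\,g(x_0)=0$ --- contradicting the nonvanishing property of $f$.

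I do not anticipate a serious obstacle: the only real idea is to spot the normalization $g=\mu^{-1}f$, after which everything follows from Theorem~\ref{thm_1a}. The one point that must be handled with care is the last step, where compactness of the closed ball is exactly what converts the equality $\inf_{x\in\bar B_{\mathbb R^n}(0,1)}\|g(x)\|=0$ into an honest zero of $g$; without compactness the argument would only yield that $g$ comes arbitrarily close to $0$, which is why the statement is specific to the closed ball $\bar B_{\mathbb R^n}(0,1)$.
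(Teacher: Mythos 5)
Your argument is correct. It does, however, take a different route from the paper's own proof of Proposition~\ref{prop_7}: there, the Hartman--Stampacchia Theorem is applied directly to $f$ on $\bar B_{\mathbb R^n}(0,1)$ to produce a boundary point $\bar x$ with $f(\bar x)=-\lambda\bar x$, $\lambda>0$, and then the estimate $\|f(\bar x)-\mu\bar x\|\geq\langle\mu\bar x-f(\bar x),\bar x\rangle=\mu+\lambda>\mu$ contradicts~\eqref{eigenvalue} without any appeal to Theorem~\ref{thm_1a}. Your reduction via the rescaling $g=\mu^{-1}f$ is precisely the alternative the authors themselves point out immediately afterwards and carry out in Theorem~\ref{thm_1a-appl}: they obtain the quantitative bound $\sup_{x}\|f(x)-\mu x\|\geq\mu+\inf_x\|f(x)\|$ by multiplying $d_g\geq\gamma_g$ through by $\mu$, and then invoke Weierstrass to see that $\inf_x\|f(x)\|>0$. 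Your closing step is a mirror image of theirs --- you deduce $\inf_x\|g(x)\|=0$ and use compactness to extract an actual zero of $g$, whereas they use compactness first to get strict positivity of the infimum --- but the two are logically equivalent. What your route buys, exactly as the paper's Theorem~\ref{thm_1a-appl} does, is the stronger quantitative conclusion~\eqref{inq1-appl} rather than the bare nonexistence statement; the direct proof is shorter but yields only the negative answer. The one point to handle with care, which you correctly flagged, is that compactness of the closed ball is what converts $\inf_x\|g(x)\|=0$ into a genuine zero of $g$.
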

\begin{proof}
 Suppose on contrary that there is a nonvanishing continuous function $f:\bar B_{\mathbb R^n}(0,1)\to \mathbb R^n$ such that there is a $\mu>0$ satisfying~\eqref{eigenvalue}. Then, applied to the variational inequality defined by the function $f$ and the nonempty compact convex constraint set $\bar B_{\mathbb R^n}(0,1)$, the Hartman-Stampacchia Theorem (see Theorem~\ref{hs_thm}) gives a vector $\bar x\in \bar B_{\mathbb R^n}(0,1)$ such that
\begin{equation}\label{VI}
	\langle f(\bar x), y-\bar x\rangle \geq 0\quad \mbox{\rm for all}\ y\in \bar B_{\mathbb R^n}(0,1).
\end{equation}
The case $\bar x\in {\rm int}\,\bar B_{\mathbb R^n}(0,1)$ is excluded, because if $\bar x\in {\rm int}\,\bar B_{\mathbb R^n}(0,1)$ then~\eqref{VI} yields $f(\bar x)=0$. So, $\bar x\in \partial \bar B_{\mathbb R^n}(0,1)$. Combining this with~\eqref{VI}, we can assert that $f(\bar x)=-\lambda \bar x$ for some $\lambda\ge 0$. The situation $\lambda=0$ cannot occur, because $f(x)\neq 0$ for every $x\in \bar B_{\mathbb R^n}(0,1)$. Hence, $f(\bar x)=-\lambda \bar x$ with $\lambda>0$. But, the latter inequality is in conflict with~\eqref{eigenvalue} since we have

\begin{equation*}
	\begin{array}{rcl}
		\mu\geq \|f(\bar x)-\mu \bar x\| & = & \|\mu \bar x-f(\bar x)\|\\
		& = & \sup\limits_{u\in \bar B_{\mathbb R^n}(0,1)}\, \langle \mu \bar x-f(\bar x), u\rangle\\
		&\geq & \langle \mu \bar x-f(\bar x), \bar x\rangle\\
		& = & \langle \mu \bar x+\lambda \bar x, \bar x\rangle\\
		& = & \langle (\mu +\lambda) \bar x, \bar x\rangle\\
		& = & \mu +\lambda.
	\end{array}
\end{equation*} The proof is complete.
$\hfill\Box$
\end{proof}

Interestingly, Proposition~\ref{prop_7} can be proved via Theorem~\ref{thm_1a}. In fact, the following refined answer for Question~2 is valid.

\begin{theorem}\label{thm_1a-appl}
	For any nonvanishing continuous function $f:\bar B_{\mathbb R^n}(0,1)\to\R^n$ and any number $\mu>0$, one has 
	\begin{equation}\label{inq1-appl}
			\sup_{x \in \bar B_{\mathbb R^n}(0,1)}\|f(x)-\mu x\|\geq \mu +\inf\limits_{x \in \bar B_{\mathbb R^n}(0,1)} \|f(x)\|.
	\end{equation} Hence, there is no function $f$ satisfying the conditions stated in Question~2.
\end{theorem}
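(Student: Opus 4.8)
The plan is to obtain~\eqref{inq1-appl} directly from Theorem~\ref{thm_1a} by a rescaling of the function, rather than to rerun the Hartman--Stampacchia argument used in Proposition~\ref{prop_7}; once~\eqref{inq1-appl} is in hand, the nonexistence claim for Question~2 follows by a one-line contradiction.

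First I would introduce the auxiliary map $g:\bar B_{\mathbb R^n}(0,1)\to\R^n$ defined by $g(x):=\dfrac{1}{\mu}f(x)$. Since $\mu>0$ and $f$ is continuous and nonvanishing, $g$ is continuous and nonvanishing on $\bar B_{\mathbb R^n}(0,1)$ as well, so Theorem~\ref{thm_1a} applies to $g$ and gives
$$\sup_{x\in\bar B_{\mathbb R^n}(0,1)}\|g(x)-x\|\;\geq\;1+\inf_{x\in\bar B_{\mathbb R^n}(0,1)}\|g(x)\|.$$
Next I would substitute $g=\mu^{-1}f$ and use the positive homogeneity of the norm, namely $\|\mu^{-1}f(x)-x\|=\mu^{-1}\|f(x)-\mu x\|$ and $\|g(x)\|=\mu^{-1}\|f(x)\|$, to pull the factor $\mu^{-1}$ out of both the supremum and the infimum. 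The displayed inequality then reads
$$\frac{1}{\mu}\sup_{x\in\bar B_{\mathbb R^n}(0,1)}\|f(x)-\mu x\|\;\geq\;1+\frac{1}{\mu}\inf_{x\in\bar B_{\mathbb R^n}(0,1)}\|f(x)\|,$$
and multiplying through by $\mu>0$ yields exactly~\eqref{inq1-appl}. No compactness or variational-inequality reasoning is needed at this stage beyond what is already packaged inside Theorem~\ref{thm_1a}; the whole content is the scaling identity above.

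Finally, for the last sentence of the theorem I would argue by contradiction. If some nonvanishing continuous $f:\bar B_{\mathbb R^n}(0,1)\to\R^n$ and some $\mu>0$ satisfied~\eqref{eigenvalue}, then $\sup_{x\in\bar B_{\mathbb R^n}(0,1)}\|f(x)-\mu x\|\leq\mu$, whereas~\eqref{inq1-appl} forces this supremum to be at least $\mu+\inf_{x\in\bar B_{\mathbb R^n}(0,1)}\|f(x)\|$. Since $\bar B_{\mathbb R^n}(0,1)$ is compact and $f$ is continuous and nowhere zero, the infimum $\inf_{x\in\bar B_{\mathbb R^n}(0,1)}\|f(x)\|=\min_{x\in\bar B_{\mathbb R^n}(0,1)}\|f(x)\|$ is strictly positive, so $\mu+\inf_{x\in\bar B_{\mathbb R^n}(0,1)}\|f(x)\|>\mu$, a contradiction.

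I do not anticipate a genuine obstacle here: the reduction to Theorem~\ref{thm_1a} is purely formal. The only point requiring a moment's care is the strict positivity of $\inf_{x}\|f(x)\|$, which is what makes the Question~2 contradiction work, and this rests on compactness of the ball together with the continuity and nonvanishing of $f$.
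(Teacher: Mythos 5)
Your proposal is correct and follows essentially the same route as the paper: the paper also sets $g:=\mu^{-1}f$, applies Theorem~\ref{thm_1a} to $g$, multiplies by $\mu$ to get~\eqref{inq1-appl}, and then uses compactness (Weierstrass) to see that $\inf_{x}\|f(x)\|>0$, contradicting~\eqref{eigenvalue}.
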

\begin{proof} Let $f:\bar B_{\mathbb R^n}(0,1)\to\R^n$ be an arbitrary  nonvanishing continuous function. Given any number $\mu>0$, we put $g(x)=\frac{1}{\mu}f(x)$ for all $x\in \bar B_{\mathbb R^n}(0,1)$. Clearly, the function $g:\bar B_{\mathbb R^n}(0,1)\to\R^n$ is nonvanishing and continuous. Therefore, by Theorem~\ref{thm_1a} we have $d_g \geq \gamma_g$. Hence,
in agreement with the notations defined in~\eqref{df} and~\eqref{gf}, we get
\begin{equation*}
	\sup_{x \in \bar B_{\mathbb R^n}(0,1)} \|g(x) - x\| \geq 1 + \inf_{x \in \bar B_{\mathbb R^n}(0,1)} \|g(x)\|.
\end{equation*} Since $g(x)=\frac{1}{\mu}f(x)$ for every $x\in \bar B_{\mathbb R^n}(0,1)$, multiplying both sides of the last inequality by $\mu$ yields~\eqref{inq1-appl}. The first assertion of the theorem has been proved.

For any nonvanishing continuous function $f:\bar B_{\mathbb R^n}(0,1)\to\R^n$ and any number $\mu>0$, by the Weierstrass theorem (see, e.g., \cite[Theorem 1.59]{bmn2022})  we find that the number $\alpha:=\inf\limits_{x \in \bar B_{\mathbb R^n}(0,1)} \|f(x)\|$ is positive and there is a vector $\bar x$ with 
$$\sup_{x \in \bar B_{\mathbb R^n}(0,1)}\|f(x)-\mu x\|=\|f(\bar x)-\mu\bar x\|.$$ So, from~\eqref{inq1-appl} it follows that
	$$\|f(\bar x)-\mu\bar x\|\geq \mu +\alpha>\mu.$$ This shows that the property~\eqref{eigenvalue} cannot hold.
$\hfill\Box$
\end{proof}

As an illustration for Theorem~\ref{thm_1a-appl}, let us consider a simple example.

\begin{example} {\rm Setting $f(x)=x^2+1 $ for all $x\in [-1,1]$, we have a nonvanishing continuous function $f:\bar B_{\mathbb R^1}(0,1)\to\R^1$. It is easily shown that  $\sup\limits_{x \in \bar B_{\mathbb R^1}(0,1)}|f(x)-\mu x|=2+\mu$ for every $\mu>0$, and the supremum is attained at $x=-1$. Since $\mu+\inf\limits_{x \in \bar B_{\mathbb R^n}(0,1)} \|f(x)\|=\mu+1$,~the inequality~\eqref{inq1-appl} is strict for all $\mu>0$.}
\end{example}

\subsection{Question~3}

It is well known (see, e.g.,~\cite[Proposition~7.1]{Deimling_1985}) that the closed unit ball in an infinite-dimensional normed space is noncompact in the normed topology. Hence, the results from the previous sections cannot be fully generalized to infinite-dimensional normed spaces, making the next question both interesting and challenging.

\medskip
\noindent{\bf Question 3:} {\it Can we find a finite-dimensional linear subspace $F$ of a Hilbert space ${\cal H}$ and a continuous function $\varphi: {\cal H}\to {\cal H}$ such that
\begin{itemize}
\item[\rm (a)] $\sup\limits_{x\in {\cal H}} \|\varphi(x)-x\| <+\infty$;
\item[\rm (b)] For each $x\in {\cal H}$, there exists $y\in F$ such that $\langle \varphi(x),y\rangle \neq 0$?
\end{itemize}}

To solve this question, we need the following auxiliary result, whose proof is based on Corollary~\ref{col_2}.

\begin{lemma}\label{lem_1}
Let $F$ be a finite-dimensional linear subspace of a Hilbert space~${\cal H}$. Then, there is no continuous operator $\varphi:F\to F$ such that
\begin{itemize}
	\item[\rm (i)] $\sup\limits_{x\in F} \|\varphi(x)-x\| <+\infty$;
	\item[\rm (ii)] For each $x\in F$, there exists $y\in F$ such that $\langle \varphi(x),y\rangle \neq 0$.
\end{itemize}
\end{lemma}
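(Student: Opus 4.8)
The plan is to reduce everything to Corollary~\ref{col_2}. First I would record that condition (ii) is nothing but the nonvanishing property of $\varphi$: if $\varphi(x)=0$ then $\langle\varphi(x),y\rangle=0$ for all $y\in F$, whereas if $\varphi(x)\neq 0$ then, since $\varphi(x)\in F$, the choice $y=\varphi(x)$ gives $\langle\varphi(x),y\rangle=\|\varphi(x)\|^2>0$. Hence (ii) holds if and only if $\varphi(x)\neq 0$ for every $x\in F$. (If $F=\{0\}$, condition (ii) can never be satisfied and the lemma is trivially true, so I may assume $\dim F=n\geq 1$.)

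Next I would argue by contradiction: assume a continuous operator $\varphi:F\to F$ satisfies (i) and (ii), and put $M:=\sup_{x\in F}\|\varphi(x)-x\|<+\infty$. Fix any linear bijection $g:F\to\R^n$. Since $F$ equipped with the Hilbert norm $\|\cdot\|$ is an $n$-dimensional normed space, Corollary~\ref{col_2}, applied with $X=F$, $\|\cdot\|_*=\|\cdot\|$ and this $g$, yields a constant $\nu>0$ with the following property: for every $r>0$ there is a nonempty compact convex set $K_r\subset F$ with $0\in\sint K_r$ such that every nonvanishing continuous function $h:K_r\to F$ satisfies
\begin{equation*}
\sup_{x\in K_r}\|h(x)-x\| \;\geq\; \nu\Big(r+\inf_{x\in K_r}\|h(x)\|\Big).
\end{equation*}

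For each fixed $r>0$ I would then apply this inequality to $h:=\varphi|_{K_r}$, which is a legitimate choice because $\varphi$ maps $F$ into $F$ and is continuous and nonvanishing on all of $F$, hence so is its restriction to $K_r$. Dropping the nonnegative term $\inf_{x\in K_r}\|\varphi(x)\|$ and using $\sup_{x\in K_r}\|\varphi(x)-x\|\leq\sup_{x\in F}\|\varphi(x)-x\|=M$, this gives $M\geq\nu r$ for all $r>0$; letting $r\to+\infty$ produces the contradiction. I do not expect any genuinely hard step here: once Corollary~\ref{col_2} is available the argument is short, and the only points needing a little care are the equivalence of (ii) with the nonvanishing of $\varphi$ (the sole place where the inner product of ${\cal H}$ intervenes) and the verification that $\varphi|_{K_r}$ is an admissible $F$-valued function in Corollary~\ref{col_2}. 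The whole force of the proof lies in the contrast between the $r$-independent bound (i) and the lower estimate of Corollary~\ref{col_2}, which grows linearly in $r$.
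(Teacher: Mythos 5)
Your proposal is correct and follows essentially the same route as the paper: assume (i) and (ii), note that (ii) forces $\varphi$ to be nonvanishing, invoke Corollary~\ref{col_2} to get a lower bound $\nu r$ for $\sup_{x\in K_r}\|\varphi(x)-x\|$, and let $r\to+\infty$ to contradict (i). Your extra remarks (the trivial case $F=\{0\}$, the converse direction of the equivalence between (ii) and nonvanishing, and the explicit choice of the bijection $g$) are harmless refinements of the same argument.
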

\begin{proof}
On the contrary, suppose that there is a continuous operator $\varphi:F\to F$ satisfying the conditions~(i) and~(ii). It follows from~(ii) that $\varphi(x)\neq 0$ for all $x\in F$. By Corollary~\ref{col_2}, there is a constant $\nu>0$ such that, for any $r>0$, one can find a nonempty compact convex set $K_r$ in $F$ with $0\in\sint K_r$, $\|x\|\leq r$ for all $x\in K_{r}$, and 
$$\sup_{x\in K_r}\|\varphi(x)-x\| \geq \nu \left(r+ \inf_{x\in K_r} \|\varphi(x)\|\right).$$
Since $\sup\limits_{x\in F}\|\varphi(x)-x\|\geq \sup\limits_{x\in K_r}\|\varphi(x)-x\|$, this yields
$$\sup_{x\in F}\|\varphi(x)-x\|\geq  \nu \left(r+ \inf_{x\in K_r} \|\varphi(x)\|\right)\geq \nu r.$$
Letting $r\to\infty$, we get $\sup\limits_{x\in F}\|\varphi(x)-x\| =+\infty$, which contradicts the condition~(i).
$\hfill\Box$
\end{proof}

The next theorem gives an answer in the negative to Question~3.

\begin{theorem}\label{thm_8}
Suppose that $F\subset {\cal H}$ is a finite-dimensional linear subspace. If  $\varphi: {\cal H}\to {\cal H}$ is such a continuous function that for each $x\in {\cal H}$ there exists $y\in F$ satisfying $\langle \varphi(x),y\rangle \neq 0$, then $\sup\limits_{x\in {\cal H}} \|\varphi(x)-x\|=+\infty$.
\end{theorem}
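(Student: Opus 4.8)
The plan is to reduce the statement about $\varphi:\mathcal H\to\mathcal H$ to the finite-dimensional situation handled by Lemma~\ref{lem_1}. Let $P:\mathcal H\to F$ denote the orthogonal projection onto the finite-dimensional subspace $F$, which is linear, continuous, and satisfies $\|Px\|\le\|x\|$ for all $x\in\mathcal H$. First I would define $\psi:F\to F$ by $\psi(x):=P(\varphi(x))$ for $x\in F$; this is continuous as a composition of continuous maps. The crucial observation is that the hypothesis ``for each $x\in\mathcal H$ there exists $y\in F$ with $\langle\varphi(x),y\rangle\ne 0$'' means exactly that $\varphi(x)$ is not orthogonal to $F$, i.e.\ $P(\varphi(x))\ne 0$; restricting to $x\in F$ we get condition~(ii) of Lemma~\ref{lem_1} for $\psi$, namely $\langle\psi(x),y\rangle=\langle P\varphi(x),y\rangle=\langle\varphi(x),y\rangle\ne 0$ for a suitable $y\in F$.

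Next I would argue by contradiction: suppose $\sup_{x\in\mathcal H}\|\varphi(x)-x\|=:M<+\infty$. For $x\in F$, since $x=Px$, we have
$$\|\psi(x)-x\|=\|P\varphi(x)-Px\|=\|P(\varphi(x)-x)\|\le\|\varphi(x)-x\|\le M,$$
so $\sup_{x\in F}\|\psi(x)-x\|\le M<+\infty$, which is condition~(i) of Lemma~\ref{lem_1}. Thus $\psi:F\to F$ satisfies both (i) and (ii), contradicting Lemma~\ref{lem_1}. Hence $\sup_{x\in\mathcal H}\|\varphi(x)-x\|=+\infty$, as claimed.

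The main obstacle, and the place requiring care, is verifying that the projection step genuinely preserves both hypotheses simultaneously: condition (ii) survives because orthogonal projection onto $F$ only records the $F$-component of $\varphi(x)$, which is precisely what the pairing against vectors of $F$ sees, and condition (i) survives because $P$ is a contraction that fixes $F$ pointwise, so it can only decrease the displacement $\|\varphi(x)-x\|$ on $F$. Once these two points are in place the reduction is immediate and Lemma~\ref{lem_1} closes the argument. One should also note that Lemma~\ref{lem_1} already incorporates (via Corollary~\ref{col_2}) the fact that $\psi$ maps $F$ into $F$, so no further identification of $F$ with some $\R^m$ is needed here.
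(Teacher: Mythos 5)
Your proposal is correct and follows essentially the same route as the paper: both reduce to Lemma~\ref{lem_1} by composing $\varphi|_F$ with the orthogonal projection onto $F$, noting that the pairing against $F$ is unchanged (so the nonvanishing condition survives) and that the projection can only decrease the displacement on $F$ (you use $\|Pz\|\le\|z\|$ with $Px=x$, the paper the equivalent Pythagorean computation). The only cosmetic difference is that you phrase the final step as a contradiction while the paper argues directly.
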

\begin{proof}
We have ${\cal H}= F \oplus F^\perp$, where $F^\perp$ is the orthogonal complement of~$F$. Define the continuous linear operators $P_1:{\cal H}\to F$ and $ P_2:{\cal H}\to F^\perp$ as follows. For each $x\in {\cal H}$ with $x=x_1+x_2$, where $x_1\in F$, $x_2\in F^\perp$, put $P_1(x)=x_1$ and $P_2(x)=x_2$. For every $x\in F$, since $P_1\big(\varphi(x)\big)-x\in F$, we have
\begin{align*}
\|\varphi(x)-x\|&= \|P_1\big(\varphi(x)\big)+P_2\big(\varphi(x)\big)-x\| \\
&= \sqrt{\|P_1\big(\varphi(x)\big)-x\|^2 +\|P_2\big(\varphi(x)\big)\|^2 + 2\langle P_1\big(\varphi(x)\big)-x, P_2\big(\varphi(x)\big)\rangle}\\
&= \sqrt{\|P_1\big(\varphi(x)\big)-x\|^2 +\|P_2\big(\varphi(x)\big)\|^2 }\\
&\geq \|P_1\big(\varphi(x)\big)-x\|.
\end{align*}
Therefore, $\sup\limits_{x\in F}\|\varphi(x)-x\| \geq\sup\limits_{x\in F} \|P_1\big(\varphi(x)\big)-x\|.$ This yields 
\begin{equation}\label{estimates}
\sup_{x\in {\cal H}}\|\varphi(x)-x\| \geq \sup\limits_{x\in F}\|\varphi(x)-x\|  \geq \sup_{x\in F} \|P_1\big(\varphi(x)\big)-x\|.
\end{equation}
By the assumption on $\varphi$, for each $x\in{\cal H}$, there exists $y\in F$ such that $\langle \varphi(x),y\rangle\neq 0$. As
$$\langle \varphi(x),y\rangle = \langle P_1\big(\varphi(x)\big)+P_2\big(\varphi(x)\big),y\rangle  =\langle P_1\big(\varphi(x)\big), y\rangle,$$ we get $\langle P_1\big(\varphi(x)\big), y\rangle \neq 0$. Hence, the continuous function $P_1\circ \big(\varphi{\big|}_{F}\big):F\to F$ has the following property: \textit{For each $x\in F$, there exists $y\in F$ such that $\big\langle \left(P_1\circ \big(\varphi{\big|}_{F}\big)\right)(x),y\big\rangle \neq 0$.} So, thanks to Lemma~\ref{lem_1} we know that 
$$\sup_{x\in F} \|P_1\big(\varphi(x)\big)-x\|=\sup_{x\in F} \|\left(P_1\circ \big(\varphi{\big|}_{F}\big)\right)(x)-x\|=+\infty.$$ 
Combining this with~\eqref{estimates} gives $\sup\limits_{x\in {\cal H}} \|\varphi(x)-x\|=+\infty$, and completes the proof.
$\hfill\Box$ 
\end{proof}

\begin{remark} {\rm The requirement~(b) in Question~3 can be termed as a \textit{nonvanishing property of $\varphi: {\cal H}\to {\cal H}$ with respect to the linear subspace $F$}. In connection with the concepts discussed in Sections~\ref{Sect-2} and~\ref{Sect-3}, the number $\sup\limits_{x\in {\cal H}} \|\varphi(x)-x\|$ can be called the \textit{maximum displacement} of $\varphi$. Then, the result in Theorem~\ref{thm_8} reads as follows: \textit{The maximum displacement of a continuous function $\varphi: {\cal H}\to {\cal H}$, which is nonvanishing w.r.t. a finite-dimensional linear subspace $F\subset {\cal H}$, is infinity.}}
\end{remark}

\begin{remark} {\rm The finite-dimensionality of the linear subspace $F$ is an essential assumption of Theorem~\ref{thm_8}. Indeed, if~${\cal H}$ is infinite-dimensional, then the maximum displacement of a continuous function $\varphi: {\cal H}\to {\cal H}$, which is nonvanishing w.r.t. ${\cal H}$, can be finite (see Theorem~\ref{thm_1-6} below).}
\end{remark}

\subsection{Question~4}

Apart from using the normed spaces setting, the next question is very different from Question~3 by its contents.

\medskip
\noindent {\bf Question 4:} {\it Let $E$ be an infinite-dimensional real normed space. Can we find a continuous operator $\psi: E\to E$ and finitely many $\varphi_1,\dots,\varphi_n\in E^*$	such that
	\begin{itemize}
\item[\rm (a1)] $\sup\limits_{x\in E} \|\psi(x)-x\| <+\infty$;
\item[\rm (b1)] For each $x\in E$ there exists $i\in\{1,\dots,n\}$ such that $\varphi_i(\psi(x))\neq 0$?
	\end{itemize}}

To answer this question, let us first prove the following lemma.

\begin{lemma}\label{lem_2}
Let $E$ be an infinite-dimensional real normed space and $L_0$ be a finite-dimensional subspace of $E$. Then, there is no continuous operator $\psi_0: L_0\to L_0$ such that
\begin{itemize}
	\item[\rm (i)] $\sup\limits_{x\in L_0} \|\psi_0(x)-x\| <+\infty$;
	\item[\rm (ii)] $\psi_0(x)\neq 0$ for all $x\in L_0$.
\end{itemize}
\end{lemma}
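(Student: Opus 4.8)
The plan is to reduce Lemma~\ref{lem_2} to the finite-dimensional estimate already established, namely Theorem~\ref{thm_3.1} or, more conveniently, its coordinate-free consequence Corollary~\ref{col_2}. Suppose for contradiction that such a continuous operator $\psi_0:L_0\to L_0$ exists. Let $m=\dim L_0\geq 1$, and fix a linear bijection $g:L_0\to\R^m$ (e.g.\ a choice of basis). Since $L_0$ is a finite-dimensional normed space with the norm $\|\cdot\|$ inherited from $E$, Corollary~\ref{col_2} applies to $L_0$ in place of $X$ and to $g$ in place of the bijective mapping: it gives a constant $\nu>0$ such that for each $r>0$ there is a nonempty compact convex set $K_r\subset L_0$ with $0\in\sint K_r$ and $\|x\|\leq r$ for all $x\in K_r$, for which every nonvanishing continuous function $f:K_r\to L_0$ satisfies
\begin{equation*}
\sup_{x\in K_r}\|f(x)-x\|\ \geq\ \nu\Bigl(r+\inf_{x\in K_r}\|f(x)\|\Bigr).
\end{equation*}

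Next I would apply this to $f:=\psi_0\big|_{K_r}$. Condition~(ii) guarantees that $\psi_0$ is nonvanishing on $L_0$, hence on $K_r$, so the displayed inequality holds. Dropping the nonnegative term $\inf_{x\in K_r}\|\psi_0(x)\|$ and using $K_r\subset L_0$, we obtain
\begin{equation*}
\sup_{x\in L_0}\|\psi_0(x)-x\|\ \geq\ \sup_{x\in K_r}\|\psi_0(x)-x\|\ \geq\ \nu r.
\end{equation*}
Letting $r\to+\infty$ forces $\sup_{x\in L_0}\|\psi_0(x)-x\|=+\infty$, contradicting condition~(i). This contradiction proves the lemma. (This is essentially the same mechanism used in the proof of Lemma~\ref{lem_1}; the only point to check is that Corollary~\ref{col_2} is genuinely applicable here, since $L_0$ is an arbitrary finite-dimensional normed space sitting inside $E$ — but that is exactly the generality in which Corollary~\ref{col_2} was stated.)

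The step I expect to require the most care is not any computation but the bookkeeping around Corollary~\ref{col_2}: one must make sure that the constant $\nu$ and the sets $K_r$ can be chosen \emph{uniformly in $r$} (they can, since $\nu$ depends only on $g$), and that the norm on $L_0$ used throughout is the restriction of $\|\cdot\|_E$, so that the conclusion~(i) of the lemma — phrased with this norm — is the one being contradicted. The infinite-dimensionality of $E$ plays no real role in the argument; it is inherited only so that the statement is phrased in the context where it will be used (in the forthcoming treatment of Question~4). Everything else is a routine transcription of the argument of Lemma~\ref{lem_1}.
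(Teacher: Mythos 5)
Your argument is correct and is essentially identical to the paper's own proof: both deduce the lemma by applying Corollary~\ref{col_2} to the finite-dimensional normed space $L_0$ (with its inherited norm), restricting $\psi_0$ to the sets $K_r$, and letting $r\to+\infty$ to contradict condition~(i). Your added care about fixing the linear bijection $g$ and about the uniformity of $\nu$ in $r$ is appropriate but does not change the mechanism, which is the same one used for Lemma~\ref{lem_1}.
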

\begin{proof}
Suppose to the contrary that there exists a continuous operator $\psi_0: L_0\to L_0$ such that~(i) and~(ii) hold. By Corollary~\ref{col_2} we can find a constant $\nu>0$ such that for any $r>0$ there is a nonempty compact convex set $K_r$ in~$L_0$ with $0\in K_r$ and $\|x\|\leq r$ for all $x\in K_r$ such that
$$\sup_{x\in K_r}\|\psi_0(x)-x\| \geq \nu \left(r+ \inf_{x\in K_r} \|\psi_0(x)\|\right).$$
Thus, we get
	$$\sup_{x\in L_0}\|\psi_0(x)-x\|\geq\sup_{x\in K_r}\|\psi_0(x)-x\| \geq \nu \left(r+ \inf_{x\in K_r} \|\psi_0(x)\|\right)\geq \nu r.$$ Letting $r\to+\infty$, we deduce that $\sup\limits_{x\in L_0}\|\psi_0(x)-x\| =+\infty$, which contradicts~(i).
	$\hfill\Box$
\end{proof}

The following theorem provides us with a solution to Question~4.

\begin{theorem}\label{prop_9}
Suppose that $\varphi_1,\dots,\varphi_n$ is a finite system of continuous linear functionals on an infinite-dimensional real normed space $E$. If $\psi:E\to E$ is a continuous operator with $\sup\limits_{x\in E} \|\psi(x)-x\| <+\infty$, i.e., $\psi$ has a finite maximum displacement, then there must exist a vector $\bar x\in X$ such that $\varphi_i(\psi(\bar x))=0$ for all  $i\in\{1,\dots,n\}$. In other words,  the conditions~{\rm (a1)} and~{\rm (b1)} in Question~4 cannot hold simultaneously. 
\end{theorem}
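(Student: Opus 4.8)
The plan is to reduce Question~4 to the finite-dimensional situation handled by Lemma~\ref{lem_2}, using the fact that a finite system of continuous linear functionals $\varphi_1,\dots,\varphi_n$ has a finite-codimensional (hence still infinite-dimensional, since $E$ is) common kernel, and that $E$ splits as a sum of a finite-dimensional piece with that kernel. Concretely, set $N:=\bigcap_{i=1}^n \ker\varphi_i$. Since each $\varphi_i$ is continuous, $N$ is a closed subspace of finite codimension in $E$, so there is a finite-dimensional subspace $L_0\subset E$ with $E = L_0 \oplus N$; let $Q:E\to L_0$ be the associated (continuous, linear, idempotent) projection along $N$.

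The key observation is that, for $x\in L_0$, the vectors $\psi(x)-x$ and $Q(\psi(x))-x$ differ by an element of $N$, and $Q$ does not increase distances by more than its (finite) operator norm: $\|Q(\psi(x))-x\| = \|Q(\psi(x)-x)\| \le \|Q\|_{\mathrm{op}}\,\|\psi(x)-x\|$. Hence, if $\psi$ has finite maximum displacement on $E$, then the continuous operator $\psi_0:=Q\circ\psi|_{L_0}:L_0\to L_0$ also has finite maximum displacement: $\sup_{x\in L_0}\|\psi_0(x)-x\|\le \|Q\|_{\mathrm{op}}\sup_{x\in E}\|\psi(x)-x\|<+\infty$. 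This verifies condition~(i) of Lemma~\ref{lem_2}.

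Next I would argue by contradiction for condition~(b1). Suppose~(b1) holds, i.e.\ for every $x\in E$ there is some index $i$ with $\varphi_i(\psi(x))\neq 0$. In particular this holds for every $x\in L_0$. For $x\in L_0$ write $\psi(x) = Q(\psi(x)) + w$ with $w\in N=\bigcap_j\ker\varphi_j$; then $\varphi_i(\psi(x)) = \varphi_i(Q(\psi(x)))$ for each $i$, so the index $i$ furnished by~(b1) satisfies $\varphi_i(\psi_0(x))\neq 0$, which forces $\psi_0(x)\neq 0$. Thus $\psi_0:L_0\to L_0$ is a continuous operator satisfying both~(i) and~(ii) of Lemma~\ref{lem_2} — a contradiction. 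Therefore~(b1) fails, meaning there exists $\bar x\in E$ (indeed one may take $\bar x\in L_0$) with $\varphi_i(\psi(\bar x))=0$ for all $i\in\{1,\dots,n\}$; equivalently, (a1) and (b1) cannot both hold.

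The main obstacle is the algebraic-topological setup at the start: one must be careful that $N=\bigcap_i\ker\varphi_i$ is genuinely closed of finite codimension (continuity of the $\varphi_i$ gives this), that a topological complement $L_0$ exists and is finite-dimensional (automatic, since every finite-dimensional subspace is complemented in a normed space and $\dim L_0 = \mathrm{codim}\, N \le n$), and that the complementing projection $Q$ is bounded — this last point is where one invokes closedness of $N$ together with finite-dimensionality of $L_0$. Once the splitting $E=L_0\oplus N$ with bounded $Q$ is in hand, the rest is the short norm estimate and the reduction to Lemma~\ref{lem_2}; no further difficulty arises. One should also note the degenerate case where some $\varphi_i$ vanish identically or the $\varphi_i$ are all zero: then $N=E$, $L_0=\{0\}$, and~(b1) already fails at $x=0$, so the conclusion is trivial.
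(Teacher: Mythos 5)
Your proposal is correct and follows essentially the same route as the paper: decompose $E=L_0\oplus\bigcap_{i}\ker\varphi_i$ with a bounded finite-rank projection, check that the projected map $Q\circ\psi\big|_{L_0}$ inherits a finite maximum displacement and is nonvanishing under (b1), and invoke Lemma~\ref{lem_2}. The only difference is cosmetic: you cite the general fact that a closed finite-codimensional subspace admits a bounded projection onto a finite-dimensional complement, whereas the paper builds the complement explicitly via $x\mapsto(\varphi_1(x),\dots,\varphi_n(x))$ and verifies boundedness of the projection by hand.
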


\begin{proof}
Suppose to the contrary that there exist  $\varphi_1,\dots,\varphi_n\in E^*$ and a continuous operator $\psi: E\to E$ satisfying the conditions~{\rm (a1)} and~{\rm (b1)}. Consider the linear mapping $A:E\to\R^n$ defined by 
$$A(x)=(\varphi_1(x),\dots,\varphi_n(x))\quad\mbox{\rm for}\ x\in E.$$
Then, $M:=A(E)$ is a subspace of $\R^n$. Put $m=\dim M$. It follows from~(b1) that $m \geq 1$. So, there exist $v^1,\dots,v^m\in M$ such that $M =\mbox{\rm span}\; \{v^1,\dots,v^m\}$ and the vectors $v^1,\dots,v^m$ are linearly independent. Take $u^1,\dots, u^m\in E$ such that $A(u^j)=v^j$ for $j\in\{1,\dots,m\}$. Clearly, $u^1,\dots, u^m$ are linearly independent. Define the sets
\begin{equation*}
\begin{array}{lll}
L_0& = &\mbox{\rm span}\; \{u^1,\dots,u^m\},\\
L^\perp& = &\big\{x\in E\mid \varphi_i(x)=0\ \mbox{\rm for all}\; i\in\{1,\dots,n\}\big\}.
\end{array}
\end{equation*}

We see that $E= L_0\oplus L^\perp$. Indeed, take any $x\in E$. Then, there exists a unique vector $(\xi_1,\dots,\xi_m)\in\R^m$ such that 
$$A(x)=\sum\limits_{j=1}^m \xi_j v^j= \sum\limits_{j=1}^m \xi_j A(u^j).$$
For $x_0:= \sum\limits_{j=1}^m \xi_j u^j$, one has $x_0\in L_0$ and 
$$A(x_0)= A\left(\sum\limits_{j=1}^m \xi_j u^j\right)= \sum\limits_{j=1}^m \xi_j A(u^j)= A(x).$$
Therefore, $A(x-x_0)=0$, which implies that $\varphi_i(x-x_0)=0$ for all $i\in\{i,\dots, n\}$. Thus, $x-x_0\in L^\perp$. Since $x\in E$ is chosen arbitrarily, $E=L_0+L^\perp$. In addition, if $x\in (L_0\cap L^\perp)$, then $A(x)=0$ and there exist $(\xi_1,\dots,\xi_m)\in R^m$ such that $x =\sum\limits_{j=1}^m \xi_j u^j$. Hence,
$$0 = A(x)  = A\left(\sum\limits_{j=1}^m \xi_j u^j\right)= \sum\limits_{j=1}^m \xi_j A(u^j) =\sum\limits_{j=1}^m \xi_j v^j.$$
Combining this with the linear independence of $\{v^1,\dots,v^m\}$ gives $\xi_j=0$ for all $j\in\{1,\dots,m\}$, which implies $x=0$. We have thus proved that $E= L_0\oplus L^\perp$. 

We now show that the restriction $A{\big|}_{L_0}: L_0\to M$ is a bijective mapping. Suppose that $A(x)=A(y)$ for some $x,y\in L_0$. Then, there exist some vectors $(\xi_1,\dots,\xi_m), (\mu_1,\dots,\mu_m)\in \R^m$ such that
\begin{equation*}
x=\sum\limits_{j=1}^m \xi_j u^j\quad\mbox{\rm and}\quad y  =\sum\limits_{j=1}^m \mu_j u^j.
\end{equation*}
So, we have
\begin{align*}
0=A(x)-A(y) &  = A\left(\sum\limits_{j=1}^m \xi_j u^j\right)- A\left(\sum\limits_{j=1}^m \mu_j u^j\right)\\ &= \sum\limits_{j=1}^m \xi_j A(u^j) - \sum\limits_{j=1}^m \mu_j A(u^j)\\&= \sum\limits_{j=1}^m (\xi_j-\mu_j) v^j.
\end{align*}
This implies that $\xi_j=\mu_j$ for all $j\in\{1,\dots,m\}$. Or equivalently, $x=y$. Moreover, for any $w\in M$, there is $(\omega_1,\dots,\omega_m)\in \R^m$ such that $w= \sum\limits_{j=1}^m \omega_j v^j$. Thus, $$w= \sum\limits_{j=1}^m \omega_j A(u^j)= A\left(\sum\limits_{j=1}^m \omega_j u^j\right).$$
So, $w= A(x)$ where $x= \sum\limits_{j=1}^m \omega_j u^j\in L_0$. Therefore, $A{\big|}_{L_0}:L_0\to M$ is a one-to-one mapping. Thus, by~\cite[Theorem~1.21]{Rudin_1991} or~\cite[Lemma~2.5]{LY_2020}, we can infer that $A{\big|}_{L_0}$ is a homeomorphism.

\smallskip
\noindent {\sc Claim 2.} {\it For any $\rho \in\R$, the set $S(\rho):= \{z\in L_0\mid \max\limits_{i\in\{1,\dots,n\}} \varphi_i(z) \leq \rho\}$ is bounded.}

Indeed, taking any $\rho\in\R$ for any $z\in S(\rho)$, we have
$$\|A(z)\| = \left\|\big(\varphi_1(z),\dots, \varphi_n(z)\big)\right\|=\sqrt{\sum\limits_{i=1}^n [\varphi_i(z)]^2}\leq \sqrt{n\rho^2}.$$
Since the linear mapping $A{\big|}_{L_0}$ is a homeomorphism, $A{\big|}_{L_0}^{-1}$ is a bounded linear operator, and hence, $S(\rho)$ is bounded.

Let $\psi_0:E\to L_0$ and  $\psi_1:E\to L^\perp$ be defined by setting $\psi_0(x)=x_0$ and $\psi_1(x)=x_1$ for every $x=x_0+x_1$ with $x_0\in L_0$ and $x_1\in L^\perp$. It follows from condition~(a1) that there is $\beta \geq 0$ such that $\sup\limits_{x\in E} \|\psi(x)-x\| =\beta$. Thus, $ \|\psi(x)-x\| \leq \beta$ for all $x\in E$. Put $\gamma = \max\limits_{i\in\{1,\dots,n\}} \|\varphi_i\|$. For any $x\in E$ and for any $i\in\{1,\dots,n\}$, one has
$$\varphi_i \left(\psi(x)-x\right) \leq \|\varphi_i \|\|\psi(x)-x\|\leq \gamma\beta.$$
Furthermore, for every $i\in\{1,\dots,n\}$,
$$\begin{array}{rcl}
\varphi_i \left(\psi(x)-x\right) = \varphi_i \left(\psi_0(x)+\psi_1(x)-x\right) & = &\varphi_i \left(\psi_0(x)-x\right)+\varphi_i\left(\psi_1(x)\right)\\ & = & \varphi_i \left(\psi_0(x)-x\right).
\end{array}
$$
Thus, $\psi_0(x)-x \in S(\gamma\beta)$ for all $x\in E$. Hence, by Claim~2, there exists $\sigma\in \R$ such that $\|\psi_0(x)-x\| \leq \sigma$ for all $x\in E$. So, 
$$\|\psi_0(x)\|\leq \sigma + \|x\|.$$
This gives $\|\psi_0(x)\|\leq \sigma+ 1$ for all $x\in E$ with $\|x\|\leq 1$. Note that the mapping $\psi_0{\big|}_{L_0}: L_0\to L_0$ is linear and bounded, and hence, it is continuous. We also have
\begin{equation}\label{inq8}
\sup\limits_{x\in E} \|\psi_0(x)-x\| \leq \sigma <+\infty.
\end{equation}
On the other hand, noting that for every $x\in E$ and for every $i\in\{1,\dots,n\}$, 
$$\varphi_i\left(\psi(x)\right) = \varphi_i\left(\psi_0(x)+\psi_1(x)\right)= \varphi_i\left(\psi_0(x)\right)+ \varphi_i\left(\psi_1(x)\right)= \varphi_i\left(\psi_0(x)\right),$$
we deduce from~(b1) that for every $x\in E$, there exists $i\in\{1,\dots,n\}$ such that $\varphi_i\left(\psi_0(x)\right)\neq 0$. It follows that $\psi_0(x)\neq 0$ for all $x\in E$. Combining this with~\eqref{inq8}, we see that the continuous function  $\psi_0{\big|}_{L_0}:L_0\to L_0$ satisfies the conditions~(i) and~(ii) in Lemma~\ref{lem_2}. We have arrived at a contradiction. So, there is no $\varphi_1,\dots,\varphi_n\in E^*$ and continuous operator $\psi:E\to E$ such that the conditions~{\rm (a1)} and~{\rm (b1)} hold simultaneously. 
$\hfill\Box$
\end{proof}

\subsection{Question~5}

Unlike the above Question~4, where the continuous function $\psi$ maps~$E$ to~$E$, the next question asks about some properties of a continuous function $\psi$ mapping~$E$ to its dual space $E^*$. These properties are stated in connection with a linear and continuous operator $\varphi$, that also maps $E$ to~$E^*$. In this context, the nonvanishing feature and the maximum displacement of continuous vector-valued functions, which have been studied intensively until now, are formulated in more sophisticated and quite new forms.

\medskip
\noindent \textbf{Question 5:} \textit{Let $E$ be a normed space, $F\subset E$ a linear subspace with ${\rm dim}(F)<\infty$, $\psi: E\rightarrow E^*$ a continuous mapping, and $\varphi: E\rightarrow E^*$ a linear and continuous operator such that 
\begin{itemize}
		\item[{\rm (a2)}] For every $y\in E$ there exists $x\in F$ satisfying $\psi(y)(x)\neq 0$; 
		\item[{\rm (b2)}] For every $x\in F\setminus \{0\}$ there exists $y\in E$ satisfying $\varphi(y)(x)\neq 0$.
\end{itemize} Then, does the equality 
	\begin{equation}\label{supremum}
		\sup\limits_{y\in E}\|\varphi(y)-\psi(y)\|_{E^*}=+\infty
	\end{equation} hold true?} 

\medskip
The next theorem gives an answer in the affirmative to Question~5.

\begin{theorem}\label{thm_4.4}
	If $E$ is a normed space, $F\subset E$ is a linear subspace with ${\rm dim}(F)<\infty$, $\psi: E\rightarrow E^*$ is a continuous mapping, and $\varphi: E\rightarrow E^*$ is a linear and continuous operator such that both conditions {\rm (a2)} and {\rm (b2)} are satisfied, then the equality~\eqref{supremum} is valid.
\end{theorem}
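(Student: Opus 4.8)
The plan is to argue by contradiction and reduce the statement to the finite-dimensional results of Section~\ref{Sect-3}, following the pattern of the proof of Theorem~\ref{prop_9} but working in dual spaces. Suppose $\beta:=\sup_{y\in E}\|\varphi(y)-\psi(y)\|_{E^*}<+\infty$. Condition~{\rm (a2)} forces $F\neq\{0\}$, so put $m:=\dim F\ge 1$, equip $F$ with the norm inherited from $E$, and note that $F^*$ is then an $m$-dimensional normed space. The new tool is the \emph{restriction operator} $R\colon E^*\to F^*$, $R(g):=g{\big|}_F$, which is linear and satisfies $\|R(g)\|_{F^*}\le\|g\|_{E^*}$ for every $g\in E^*$ (because the norm of $F$ is the restriction of that of $E$).

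The decisive step is to show that $R\circ\varphi\colon E\to F^*$ is \emph{surjective}. If its range $N$ were a proper subspace of $F^*$, then by the annihilator dimension count in the finite-dimensional space $F$ there would be a vector $x_0\in F\setminus\{0\}$ with $g(x_0)=0$ for every $g\in N$; since $(R\varphi(y))(x_0)=\varphi(y)(x_0)$ for all $y\in E$, this would give $\varphi(y)(x_0)=0$ for all $y\in E$, contradicting~{\rm (b2)}. Hence $R\circ\varphi$ is onto. Fixing a basis $g_1,\dots,g_m$ of $F^*$ and choosing $u^1,\dots,u^m\in E$ with $R\varphi(u^j)=g_j$, the vectors $u^1,\dots,u^m$ are linearly independent, $L_0:={\rm span}\{u^1,\dots,u^m\}$ is an $m$-dimensional subspace of $E$, and $T:=(R\circ\varphi){\big|}_{L_0}\colon L_0\to F^*$ is a linear bijection, hence a homeomorphism by~\cite[Theorem~1.21]{Rudin_1991}; in particular $C:=\|T^{-1}\|<+\infty$.

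Next I would transplant $\psi$ to a self-map of $L_0$ by setting $\psi_0:=T^{-1}\circ R\circ\big(\psi{\big|}_{L_0}\big)\colon L_0\to L_0$, which is continuous. It is nonvanishing: $\psi_0(x)=0$ would force $R\psi(x)=0$, i.e.\ $\psi(x)(z)=0$ for all $z\in F$, contradicting~{\rm (a2)}. And it has bounded displacement: since $Tx=R\varphi(x)$ for $x\in L_0$, one has $\psi_0(x)-x=T^{-1}\big(R(\psi(x)-\varphi(x))\big)$, so $\|\psi_0(x)-x\|\le C\,\|\psi(x)-\varphi(x)\|_{E^*}\le C\beta$ for every $x\in L_0$, whence $\sup_{x\in L_0}\|\psi_0(x)-x\|\le C\beta<+\infty$.

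Finally, this contradicts the finite-dimensional theory: applying Corollary~\ref{col_2} with $X=L_0$ (and any linear bijection $L_0\to\R^m$) yields a constant $\nu>0$ such that for every $r>0$ there is a compact convex set $K_r\subset L_0$ with $\sup_{x\in K_r}\|\psi_0(x)-x\|\ge\nu r$; letting $r\to+\infty$ gives $\sup_{x\in L_0}\|\psi_0(x)-x\|=+\infty$ (equivalently, one may invoke Lemma~\ref{lem_2}), contradicting the previous paragraph. Hence $\beta=+\infty$, i.e.~\eqref{supremum} holds. The one genuinely delicate point is the surjectivity of $R\circ\varphi$: this is exactly where hypothesis~{\rm (b2)} enters, and it is what allows $\psi$ to be pulled back, through the homeomorphism $T$, to a self-map of a \emph{full}-dimensional subspace of $E$, after which the Section~\ref{Sect-3} machinery applies; everything after that is a routine reduction.
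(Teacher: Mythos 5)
Your proof is correct and follows essentially the same route as the paper's: both use (b2) to show that $y\mapsto\varphi(y)\big|_F$ maps $E$ onto $F^*$, restrict to an $m$-dimensional subspace of $E$ on which this map is a linear homeomorphism, use (a2) to obtain a nonvanishing continuous self-map of a finite-dimensional space with bounded displacement, and contradict the lower estimates of Section~\ref{Sect-3}. The only (harmless) differences are cosmetic: you run the argument by contradiction and conjugate $\psi$ back to a self-map of $L_0\subset E$ via $T^{-1}$, invoking Corollary~\ref{col_2} directly, whereas the paper pushes forward to a self-map of $F$ (identified with $F^*$ via an auxiliary Euclidean structure) and invokes Lemma~\ref{lem_1}.
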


\noindent{\bf Proof.} If $E$ is a trivial normed space, i.e., $E=\{0\}$, then one has $F=\{0\}$ and $E^*=\{0\}$. Hence,~(a2) fails to hold. Thus, the assertion of the theorem is valid.

Consider the case $E$ has nonzero vectors. Let ${\rm dim}(F)=m$. Condition~(a2) implies that $m\geq 1$. Indeed, taking any $\bar y\in E$, we have $\psi(\bar y)\in E^*$. By~(a2), we can find $\bar x\in F$ such that $\varphi(\bar y)(\bar x)\neq 0$. This forces $\bar x\neq 0$. Hence, ${\rm dim}(F)\geq 1$. 

Clearly, the properties {\rm (a2)} and {\rm (b2)} are still valid if the norm in $F$ is replaced by another equivalent norm. So, we can equip $F$ with a scalar product and the corresponding Euclidean norm, denoted by $\|.\|_2$. Via the scalar product, $F^*$ can be identified with $F$. Let $\theta$ be a positive constant such that $\|x^*\|_{F^*}\geq\theta \|x^*\|_2$ for all $x^*\in F^*$.

\smallskip
\noindent {\sc Claim~1.} \textit{The set $L:=\big\{\varphi(y){\big|}_{F}\mid y\in E\big\}$, where $\varphi(y){\big|}_{F}$ denotes the restriction of $\varphi(y)$ on~$F$, coincides with $F^*$.}

Indeed, by the linearity of the operator $\varphi: E\rightarrow E^*$, $L$ is a linear subspace of the dual space~$F^*$, which has been identified with $F$. Besides, for every $x^*\in F^*$ and every $x\in F$, one has $x^*(x)=\langle x^*,x\rangle$. If $L$ is a proper linear subspace of $F$, then there exists $\bar x$ in $F\setminus L$ such that $\langle z,\bar x\rangle =0$ for all $z\in L$. It follows that $$0=\langle \varphi(y){\big|}_{F},\bar x\rangle=\varphi(y){\big|}_{F}(\bar x)= \varphi(y)(\bar x)$$ for all $y\in E$, contrary to~(b2). We have thus proved that $L=F^*$. 

\smallskip
Since the linear operator $\Phi: E\to L$, where $\Phi(y):=\varphi(y){\big|}_{F}$ for all $y\in E$, is continuous and surjective, Claim~1 allows us to find a linear subspace $M$ of $E$ with ${\rm dim}\, M=m$ such that $E={\rm ker}(\Phi)\oplus M$ (see~\cite[Definition~4.20]{Rudin_1991} for the definition of \textit{direct sum} and ~\cite[Lemma~4.21]{Rudin_1991} for the existence of a \textit{complement} of ${\rm ker}(\Phi)$ in $E$). By the construction of $M$, $\Phi{\big|}_{M}:M\to L$ is a linear homeomorphism. Hence, the inverse map $\Phi{\big|}_{M}^{-1}:L\to M$ is a continuous linear operator. 

We have
\begin{eqnarray}\label{transformations}
	\begin{array}{rcl} \sup\limits_{y\in E}\|\varphi(y)-\psi(y)\|_{E^*} &=& \sup\limits_{y\in E}\left(\sup\limits_{v\in \bar B_E(0,1)}\big(\varphi(y)-\psi(y)\big)(v)\right)\\
		&\geq& \sup\limits_{y\in E}\left(\sup\limits_{v\in \bar B_F(0,1)}\big(\varphi(y)-\psi(y)\big)(v)\right)\\
		&=& \sup\limits_{y\in E}\left(\sup\limits_{v\in \bar B_F(0,1)}\big(\varphi(y){\big|}_{F}-\psi(y){\big|}_{F}\big)(v)\right)\\
		&=&	\sup\limits_{y\in E}\|\varphi(y){\big|}_{F}-\psi(y){\big|}_{F}\|_{F^*}\\
		&\geq&	\sup\limits_{y\in M}\|\varphi(y){\big|}_{F}-\psi(y){\big|}_{F}\|_{F^*}\\
		&\geq&	\theta\,\sup\limits_{z\in F}\|z-\psi\Big(\Phi{\big|}_{M}^{-1}(z)\Big){\big|}_{F}\|_2,
	\end{array}
\end{eqnarray} where the change of variable $z=\varphi(y){\big|}_{F}$, which yields $y=\Phi{\big|}_{M}^{-1}(z)$, has been done in the last transformation. The mapping $\psi\Big(\Phi{\big|}_{M}^{-1}(\cdot)\Big){\big|}_{F}:F\to F$, which is defined by setting $\psi\Big(\Phi{\big|}_{M}^{-1}(\cdot)\Big){\big|}_{F}(z)=\Big[\psi\Big( \Phi{\big|}_{M}^{-1}(z)\big)\Big]{\big|}_{F}$ for all $z\in F$,  is continuous by the continuity of $\psi$. In addition, for every $z\in F$, by using condition~(a2) for $y:=\Phi{\big|}_{M}^{-1}(z)$ we can find $x\in F$ such that $\psi(y)(x)\neq 0$. Since the latter can be rewritten as $\langle \psi\Big(\Phi{\big|}_{M}^{-1}(z)\Big){\big|}_{F},x\rangle\neq 0$, we can apply Lemma~\ref{lem_1} for $f:=\psi\Big(\Phi{\big|}_{M}^{-1}(\cdot)\Big){\big|}_{F}$ to obtain $$\sup\limits_{z\in F}\|z-\psi\Big(\Phi{\big|}_{M}^{-1}(z)\Big){\big|}_{F}\|_2=+\infty.$$ From this and~\eqref{transformations} we can infer that the equality~\eqref{supremum} is valid. 

The proof is complete. $\hfill\Box$

\subsection{Question~6}

It is of interest to know what happen if the condition ${\rm dim}(F)<\infty$ in Question~5 and in Theorem~\ref{thm_4.4} is violated. Probably, this is one of the strong motivations for the following question of Professor Ricceri~\cite{Ricceri_2024}.

\medskip
\noindent \textbf{Question 6:} \textit{Let $E$ be a normed space and $F\subset E$ a linear subspace with ${\rm dim}(F)=\infty$. Does there exist a continuous mapping  $\psi: E\rightarrow E^*$ and a linear and continuous operator $\varphi: E\rightarrow E^*$ satisfying the conditions \begin{itemize}
		\item[{\rm (a3)}] For every $y\in E$ there exists $x\in F$ satisfying $\psi(y)(x)\neq 0$; 
		\item[{\rm (b3)}] For every $x\in F\setminus \{0\}$ there exists $y\in E$ satisfying $\varphi(y)(x)\neq 0$
	\end{itemize} such that the inequality 
	\begin{equation}\label{supremum_a1-6}
		\sup\limits_{y\in E}\|\varphi(y)-\psi(y)\|_{E^*}<+\infty
	\end{equation} holds true?} 

\medskip
 To solve the above question in the affirmative in one special case, we need the next auxiliary result on nonvanishing continuous operators having finite maximum displacements. Note that the continuous function $\psi$ in Question~6 maps $E$ to the dual space $E^*$, while the function $\psi$ in the forthcoming theorem maps $E$ to $E$.

\begin{theorem}\label{thm_1-6} 
	Let $E$ be an infinite-dimensional normed space. Then, there exists a nonvanishing continuous operator $\psi:E\to E$ such that
	\begin{equation}\label{supremum_a2}
		\sup\limits_{y\in E}\|y-\psi(y)\|\leq 1.
	\end{equation}
\end{theorem}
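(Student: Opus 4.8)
The plan is to exploit the fact that an infinite-dimensional normed space admits a fixed-point-free self-homeomorphism of its closed unit ball with small displacement, and to extend such a map to all of $E$ by a radial construction. First I would recall the classical theorem of Klee (or the explicit construction via a non-compact "shift"-type operator): since $E$ is infinite-dimensional, the closed unit ball $\bar B_E(0,1)$ is not compact, and by Dugundji's extension theorem together with Klee's result there is a continuous map $h:\bar B_E(0,1)\to\partial\bar B_E(0,1)$ which is the identity on the boundary (a continuous retraction onto the sphere); more to the point, one can manufacture a continuous map $\psi_0:\bar B_E(0,1)\to\bar B_E(0,1)$ with no fixed point and with $\|\psi_0(y)-y\|$ as small as we like. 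The cleanest route is: fix $\varepsilon\in(0,1]$ and produce a continuous $g:\bar B_E(0,1)\to\partial\bar B_E(0,1)$; then the map $y\mapsto \psi_0(y):=(1-\varepsilon)y+\varepsilon g(y)$ is continuous, never zero provided $\varepsilon$ is chosen so that $(1-\varepsilon)\|y\|<\varepsilon$ fails only harmlessly — actually one checks $\|\psi_0(y)\|\ge \varepsilon - (1-\varepsilon)\ge 0$ needs more care, so instead I would directly use the standard fixed-point-free self-map of the ball and renormalize displacements.

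The key steps, in order, are: (1) Use infinite-dimensionality to obtain a continuous retraction $g$ of $\bar B_E(0,1)$ onto its unit sphere $\partial\bar B_E(0,1)$; this is where the infinite-dimensional hypothesis is essential and is classical. (2) Define $\psi$ on the ball by $\psi(y)=g(y)$ for $\|y\|\le 1$ — wait, that has displacement up to $2$, so instead set $\psi(y)=\tfrac12\big(y+g(y)\big)$ on $\|y\|\le 1$; then $\|y-\psi(y)\|=\tfrac12\|y-g(y)\|\le 1$, and $\psi(y)\ne 0$ because if $\psi(y)=0$ then $g(y)=-y$, forcing $\|y\|=1$ and $g(y)=y$ (as $g$ fixes the sphere), hence $y=-y$, i.e. $y=0$, contradicting $\|y\|=1$. (3) Extend to the exterior $\|y\|>1$ by a radial rule: for $\|y\|=t\ge 1$ write $y=t\,u$ with $\|u\|=1$ and set $\psi(y)=t\,\psi(u)=t\,g(u)$ (since $\psi(u)=g(u)$ on the sphere up to the factor $\tfrac12(u+g(u))=g(u)$ because $g(u)=u$... one must track this), so that $\psi$ is continuous across $\|y\|=1$ and $\|y-\psi(y)\|=t\|u-g(u)\|$ — this blows up, which is wrong. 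So step (3) must instead be: for $\|y\|>1$ simply put $\psi(y)=\psi\big(y/\|y\|\big)$ composed with a scaling that keeps the displacement bounded, or more simply $\psi(y)=y$ is forbidden (it vanishes at $0$ but $0$ is in the ball anyway). The correct and simplest extension is $\psi(y):=\psi\big(P(y)\big)$ where $P$ is the radial retraction of $E$ onto $\bar B_E(0,1)$, i.e. $P(y)=y$ if $\|y\|\le 1$ and $P(y)=y/\|y\|$ otherwise; then $\psi$ is continuous, $\psi(E)\subset\bar B_E(0,1)$ so $\psi$ is bounded, $\psi$ is nonvanishing by step (2), and $\|y-\psi(y)\|\le\|y-P(y)\|+\|P(y)-\psi(P(y))\|$. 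The first term $\|y-P(y)\|=\|y\|-1$ is unbounded — still wrong.

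The real fix is to make the displacement bounded by composing with a bounded retraction that is itself $1$-Lipschitz toward a bounded set \emph{and} has bounded displacement, which the radial retraction does not. So the honest plan: pick $\psi$ to take values in the \emph{unit sphere} via $\psi(y)=g(P(y))$ is still bad. Instead, the intended construction (this is the heart of the matter) is to use that there is a homeomorphism $T$ of $E$ onto $E\setminus\{0\}$ which is bounded perturbation of the identity: explicitly, using a functional $\phi\in E^*$ and a vector $e$ with $\phi(e)=1$, one builds a "sliding" map, but in a general normed space one instead invokes Klee's theorem that $\bar B_E(0,1)$ is homeomorphic to $\bar B_E(0,1)\setminus\{0\}$ \emph{and} that the homeomorphism can be taken to move points by at most $1$; one then glues the identity outside the ball. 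The hard part — and the step I expect to be the main obstacle — is producing a \emph{globally defined} nonvanishing continuous $\psi:E\to E$ with $\sup_y\|y-\psi(y)\|\le 1$ from the ball-level construction, i.e. controlling the displacement on the unbounded region $\{\|y\|>1\}$ while preserving nonvanishing; the resolution is to choose $\psi(y)=y$ for $\|y\|\ge 1$ (displacement $0$ there, and $\psi(y)=y\ne 0$) and $\psi(y)=\tfrac12(y+g(y))$ only for $\|y\|\le 1$, then check continuity at $\|y\|=1$: there $\tfrac12(y+g(y))=\tfrac12(y+y)=y$ since $g$ fixes the sphere, so the two definitions agree, $\psi$ is continuous, bounded-displacement ($\le 1$), and nonvanishing. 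I would verify these three properties carefully and conclude. The one genuinely nontrivial ingredient, which I would cite rather than reprove, is the existence of the continuous retraction $g:\bar B_E(0,1)\to\partial\bar B_E(0,1)$ fixing the sphere, valid precisely because $\dim E=\infty$.
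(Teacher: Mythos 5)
Your final construction is correct, and it reaches the goal by a route that is a close cousin of, but not identical to, the paper's. The paper starts from a continuous fixed-point-free self-map $f:\bar B_E(0,1)\to\bar B_E(0,1)$ (citing Deimling), extends it radially to all of $E$ by $\widetilde f(x)=f\big(x/\|x\|\big)$ for $\|x\|>1$, and sets $\psi=\mathrm{id}-\widetilde f$; nonvanishing inside the ball is exactly the absence of fixed points, nonvanishing outside is the estimate $\|x\|>1\ge\|\widetilde f(x)\|$, and the displacement is $\|\widetilde f(x)\|\le 1$ everywhere. You instead start from a continuous retraction $g$ of the ball onto the sphere fixing the sphere, set $\psi(y)=\tfrac12\big(y+g(y)\big)$ for $\|y\|\le 1$ and $\psi(y)=y$ for $\|y\|\ge 1$, and glue along $\|y\|=1$, where both formulas give $y$. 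Your three verifications are sound: continuity by the pasting lemma on the two closed pieces; nonvanishing because $\psi(y)=0$ forces $g(y)=-y$, hence $\|y\|=1$, hence $y=g(y)=-y$, a contradiction; and displacement $\tfrac12\|y-g(y)\|\le\tfrac12(\|y\|+\|g(y)\|)\le 1$. The two external ingredients are classically equivalent (a fixed-point-free self-map of the ball yields a retraction onto the sphere and conversely), so neither approach is more general; yours has the mild advantage that $\psi$ is literally the identity off the ball, while the paper's uses the fixed-point-free map exactly as stated in its reference without passing through the retraction, whose construction from $f$ needs a small continuity argument for the exit point of the ray from $f(x)$ through $x$. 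One caveat: most of your write-up consists of abandoned attempts (the scaling $\psi(y)=t\,g(u)$, the composition with the radial retraction $P$) that you correctly diagnose as having unbounded displacement; only the last construction should survive into a final proof, together with a clean citation of the fact that in every infinite-dimensional normed space the unit sphere is a continuous retract of the closed unit ball.
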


\noindent{\bf Proof.} Given an infinite-dimensional normed space $E$, we can find a continuous mapping $f:\bar B_E(0,1)\to \bar B_E(0,1)$, which has no fixed point (see, e.g.,~\cite[Remarks~8.7, p.~66]{Deimling_1985}). Define a map $\psi:E\to E$ by the formula
\begin{equation}\label{psi}
	\psi(x)=\begin{cases}
		x-f(x) & {\rm if}\ x\in \bar B_E(0,1)\\
		x-f\left(\dfrac{x}{\|x\|}\right) & {\rm if}\ x\notin \bar B_E(0,1).
	\end{cases}
\end{equation}

{\sc Claim~1}. \textit{$\psi$ is nonvanishing.}

\smallskip
Indeed, if $x\in \bar B_E(0,1)$, then $\psi(x)=x-f(x)\neq 0$ because the mapping $f:\bar B_E(0,1)\to \bar B_E(0,1)$ has no fixed point. If $x\notin \bar B_E(0,1)$, then $\|x\|>1$. Therefore, if $\psi(x)=x-f\left(\dfrac{x}{\|x\|}\right)=0$, then one has
$$1<\|x\|=\|f\left(\dfrac{x}{\|x\|}\right)\|\leq 1,$$ which is impossible.

\smallskip
{\sc Claim~2}. \textit{$\psi$ is continuous.}

\smallskip
Since the identity mapping is continuous, by~\eqref{psi} it is clear that the claim will be proved if we can show that the function
\begin{equation}\label{f_tilde}
	\widetilde f(x):=\begin{cases}
		f(x) & {\rm if}\ x\in \bar B_E(0,1)\\
		f\left(\dfrac{x}{\|x\|}\right) & {\rm if}\ x\notin \bar B_E(0,1)
	\end{cases}
\end{equation} is continuous on $E$. Take any $\bar x\in E$ and suppose that $V\subset E$ is an open set containing~$\widetilde f(\bar x)$. There is an open neighborhood $U$ of $\bar x$ such that
\begin{equation}\label{neighborhood_U}
	\widetilde f(x)\in V\quad {\rm for\ all}\ x\in U. 
\end{equation} 
Indeed, if $\|\bar x\|<1$, then there exists an open set $U_0\subset B_E(0,1)$ with $\bar x\in U_0$. As $\widetilde f(x)=f(x)$ for all $x\in U_0$, by the continuity of $f$ we can find an open set  $U\subset U_0$ such that $\bar x\in U$ and $f(x)\in V$ for all $x\in U$. Then, from~\eqref{f_tilde} it follows that~\eqref{neighborhood_U} holds with this $U$. 

If $\|\bar x\|=1$, then by the continuity of $f:\bar B_E(0,1)\to \bar B_E(0,1)$ we can find an open set $U_1$ containing $\bar x$ such that
$f\big(U_1\cap \bar B_E(0,1)\big)\subset V\cap\bar B_E(0,1).$ Hence, from~\eqref{f_tilde} it follows that
\begin{equation}\label{U1}
	\widetilde f\big(U_1\cap \bar B_E(0,1)\big)\subset V. 
\end{equation} Clearly, the map $g:E\setminus\{0\}\to \bar B_E(0,1)$, where $g(x):=\dfrac{x}{\|x\|}$ for every $x\neq 0$, is continuous. So, the map $f\circ g:E\setminus\{0\}\to \bar B_E(0,1)$ is continuous. We observe for any $x\in E\setminus\bar B_E(0,1)$ that
	$$(f\circ g)(x)=f(g(x))=f\left(\dfrac{x}{\|x\|}\right)=\widetilde f(x).$$ Note that $(f\circ g)(\bar x)=f(\bar x)=\widetilde f(\bar x)\in V$. So, by the continuity of the map $f\circ g:E\setminus\{0\}\to \bar B_E(0,1)$ we can find an open set $U_2$ containing $\bar x$ such that $(f\circ g)\big(U_2\big)\subset V$. Therefore,
	\begin{equation}\label{U2a}
		\widetilde f\Big(U_2\cap \big(E\setminus \bar B_E(0,1)\big)\Big)=(f\circ g)\Big(U_2\cap \big(E\setminus \bar B_E(0,1)\big)\Big)\subset V.
	\end{equation}
	Setting $U=U_1\cap U_2$, we deduce from~\eqref{U1} and~\eqref{U2a} that~\eqref{neighborhood_U} is valid.

If $\|\bar x\|>1$, then by the continuity of $f\circ g$, where  $g:E\setminus\{0\}\to \bar B_E(0,1)$ is defined by setting $g(x)=\dfrac{x}{\|x\|}$ for every $x\neq 0$, and the fact that
$$(f\circ g)(x)=\widetilde f(x)\quad {\rm for\ every}\ x\in E\setminus\bar B_E(0,1)$$ we can find an open neighborhood $U$ of $\bar x$ with $U\subset E\setminus\bar B_E(0,1)$ such that $(f\circ g)(U)\subset V$. This implies ~\eqref{neighborhood_U} and completes the proof of the continuity of $\widetilde f$.

\smallskip
{\sc Claim~3}. \textit{The inequality~\eqref{supremum_a2} holds.}

\smallskip
The desired inequality follows directly from~\eqref{psi}.

\smallskip
Summing up all the above, we conclude that the operator $\psi:E\to E$ constructed by~\eqref{psi} is nonvanishing,  continuous, and it satisfies the inequality~\eqref{supremum_a2}. $\hfill\Box$

\medskip
The next theorem gives an answer in the affirmative to Question~6 for one class of normed spaces. The question remains open if $E$ is not a Hilbert space.

\begin{theorem}\label{thm_2-6}
	For any infinite-dimensional Hilbert space $E$ and for any a linear subspace $F\subset E$ with ${\rm dim}(F)=\infty$, there exist a continuous mapping  $\psi: E\rightarrow E^*$ and a continuous linear operator $\varphi: E\rightarrow E^*$ satisfying the conditions~{\rm (a3)} and~{\rm (b3)} in Question~6 such that the inequality~\eqref{supremum_a1-6} holds true.
\end{theorem}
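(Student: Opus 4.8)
The plan is to exploit the Hilbert structure in two ways: to identify $E^*$ with $E$ via the Riesz--Fr\'echet Representation Theorem~\cite[Theorem~5.5]{Brezis_2011}, and to use orthogonal projections in order to transplant the construction behind Theorem~\ref{thm_1-6} onto the subspace~$F$. After the Riesz identification, a map into $E^*$ becomes a map into $E$, the pairing $\psi(y)(x)$ becomes the scalar product $\langle\psi(y),x\rangle$, and $\|\cdot\|_{E^*}$ becomes $\|\cdot\|$. Thus it suffices to produce a continuous map $\psi:E\to E$ and a continuous linear operator $\varphi:E\to E$ such that (a3$'$) for every $y\in E$ there is $x\in F$ with $\langle\psi(y),x\rangle\neq 0$; (b3$'$) for every $x\in F\setminus\{0\}$ there is $y\in E$ with $\langle\varphi(y),x\rangle\neq 0$; and $\sup_{y\in E}\|\varphi(y)-\psi(y)\|<+\infty$. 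I would take $\varphi=\mathrm{Id}_E$: then (b3$'$) holds trivially (choose $y=x$, so that $\langle\varphi(y),x\rangle=\|x\|^2>0$), and the problem reduces to constructing a continuous $\psi:E\to E$ with $\sup_{y\in E}\|y-\psi(y)\|\le 1$ that, in addition, satisfies (a3$'$).

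Next I would set $H:=\overline{F}$, the closure of $F$ in $E$. Since $\dim F=\infty$, the space $H$ is an infinite-dimensional closed subspace of $E$, hence an infinite-dimensional Hilbert space, and $H^\perp=F^\perp$. Let $P:E\to H$ be the orthogonal projection, so that $E=H\oplus H^\perp$. Applying Theorem~\ref{thm_1-6} to the infinite-dimensional Hilbert space $H$ in place of $E$ yields a nonvanishing continuous operator $\psi_0:H\to H$ with $\sup_{z\in H}\|z-\psi_0(z)\|\le 1$. I would then define
\[
\psi(y):=\psi_0(Py)+\bigl(y-Py\bigr),\qquad y\in E,
\]
which is continuous because $P$, $\psi_0$, and $I-P$ are. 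Writing $y=Py+(I-P)y$ gives $\psi(y)-y=\psi_0(Py)-Py$, whence $\|\psi(y)-y\|=\|\psi_0(Py)-Py\|\le 1$, so that $\sup_{y\in E}\|\varphi(y)-\psi(y)\|\le 1<+\infty$, i.e.~\eqref{supremum_a1-6} holds after the Riesz identification.

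It remains to verify (a3$'$), and this is the only genuinely delicate point of the argument. Fix $y\in E$. Since $\psi_0(Py)\in H$ and $(I-P)y\in H^\perp$, the orthogonal projection satisfies $P(\psi(y))=\psi_0(Py)$, which is nonzero because $\psi_0$ is nonvanishing on $H$. Now $\psi_0(Py)$ is a \emph{nonzero} vector of $H=\overline{F}$; if $\langle\psi_0(Py),x\rangle=0$ held for every $x\in F$, then by continuity of the scalar product the same would hold for every $x\in\overline{F}=H$, contradicting $\langle\psi_0(Py),\psi_0(Py)\rangle\neq 0$. Hence there is $x\in F$ with $\langle\psi_0(Py),x\rangle\neq 0$, and since $x\in F\subset H$ while $(I-P)y\in H^\perp$ we get $\langle\psi(y),x\rangle=\langle\psi_0(Py),x\rangle\neq 0$, which is (a3$'$). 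Transporting everything back to $E^*$ through the Riesz isometry $J:E\to E^*$, $J(y)=\langle y,\cdot\rangle$ (linear and isometric, hence a continuous linear operator), we obtain the continuous mapping $\psi:E\to E^*$ and the continuous linear operator $\varphi=J:E\to E^*$ satisfying (a3), (b3), and~\eqref{supremum_a1-6}. The main obstacle is not the existence of a nonvanishing small-displacement operator — that is precisely Theorem~\ref{thm_1-6} — but rather ensuring that its nonvanishing property, which naturally lives on the \emph{closed} subspace $\overline{F}$, still delivers a separating vector inside the possibly non-closed subspace $F$ demanded by condition~(a3); density of $F$ in $\overline F$ together with continuity of the inner product is what makes this step go through, and it is here that the full Hilbert structure is used, consistent with the question remaining open for general normed spaces.
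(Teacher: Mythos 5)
Your proposal is correct and follows essentially the same route as the paper's proof: identify $E^*$ with $E$ via Riesz, take $\varphi=\mathrm{Id}$, transplant the nonvanishing bounded-displacement construction of Theorem~\ref{thm_1-6} onto the subspace generated by $F$, and extend it to all of $E$ by acting as the identity on the orthogonal complement. The one genuine difference is that you work with $H=\overline{F}$ and recover condition~(a3) on $F$ itself by density of $F$ in $\overline{F}$ together with continuity of the inner product. This is actually more careful than the paper's argument, which writes $E=F\oplus F^{\perp}$ and takes the orthogonal projection $\pi$ onto $F$ directly; that decomposition is only valid when $F$ is closed (for a dense proper subspace $F$ one has $F^{\perp}=\{0\}$), so the paper's proof implicitly assumes closedness while the hypothesis of Question~6 does not. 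Your version covers the general case at no extra cost, and the verification of (a3), (b3), and the bound $\sup_{y\in E}\|\varphi(y)-\psi(y)\|\le 1$ all go through as you describe.
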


\noindent{\bf Proof.} As $E$ is a Hilbert space, we can identify $E^*$ with $E$. Besides, for the given infinite-dimensional linear subspace $F\subset E$, we have $E=F\oplus F^\perp$, where $F^\perp$ denotes the orthogonal complement of $F$ in $E$. By $\pi$ we denote the orthogonal projection from $E$ onto $F$. Then, for every $x\in E$, it holds that $x_1:=\pi (x)$ belongs to $F$, $x_2:=x-\pi(x)$ belongs to $F^\perp$, and $x=x_1+x_2$. Since ${\rm dim}(F)=\infty$, according to~\cite[Remarks~8.7, p.~66]{Deimling_1985}, we can find a continuous mapping $g:\bar B_F(0,1)\to \bar B_F(0,1)$ having no fixed point. Define a map $\psi_F:F\to F$ by the formula
\begin{equation}\label{psi_F}
	\psi_F(u)=\begin{cases}
		u-g(u) & {\rm if}\ u\in \bar B_F(0,1)\\
		u-g\left(\dfrac{u}{\|u\|}\right) & {\rm if}\ u\in F\setminus\bar B_F(0,1).
	\end{cases}
\end{equation} The arguments given in the proof of Theorem~\ref{thm_1-6} show that $\psi_F:F\to F$ is a  nonvanishing continuous operator. Now, we can construct an operator $\psi:E\to E^*$, where $E^*=E$, by setting
\begin{equation}\label{psi_E}
	\psi(x)=\big(x-\pi(x)\big)+\psi_F(\pi(x)) 
\end{equation} for all $x\in E$. Clearly, $\psi:E\to E$ is a continuous operator. 

For every $y\in E$, since $\psi_F(\pi(y))\in F\setminus\{0\}$ and  ${\rm dim}(F)=\infty$, there exists $x\in F$ satisfying $\langle \psi_F(\pi(y)),x\rangle\neq 0$. From~\eqref{psi_E} it follows that 
$$\langle \psi(y),x\rangle= \langle \big(y-\pi(y)\big)+\psi_F(\pi(y)),x\rangle= \langle \psi_F(\pi(y)),x\rangle,$$ where the second equality is valid because $y-\pi(y)\in F^\perp$. Hence, we get $\langle \psi(y),x\rangle\neq 0$. Thus, the condition~(a3) is satisfied. 

Setting $\varphi(x)=x$ for all $x\in E$, we see at once that $\varphi:E\to E^*$, where $E^*=E$, is a linear and continuous operator. To verify the condition~(b3), take any  $x\in F\setminus \{0\}$ and choose $y=x$. Then one has $\varphi(y)(x)=\langle y,x\rangle =\langle x,x\rangle\neq 0$. 

Furthermore, for every $y\in E$, it holds that 
\begin{equation}\label{norms_1}\begin{array}{rcl}
		\|\varphi(y)-\psi(y)\|_{E^*}=\|y-\psi(y)\|_{E} &=&\|y-\big[(y-\pi(y))+\psi_F(\pi(y))\big]\|_{E}\\
		&=&\|\pi(y)-\psi_F(\pi(y))\|_{E}.
	\end{array}
\end{equation}
In addition, by~\eqref{psi_F} we have
\begin{equation}\label{norms_2}
	\|\pi(y)-\psi_F(\pi(y))\|_{E}=\begin{cases}
		\|g(\pi(y))\| & {\rm if}\ \pi(y)\in \bar B_F(0,1)\\
		\|g\left(\dfrac{\pi(y)}{\|\pi(y)\|}\right)\| & {\rm if}\ \pi(y)\in F\setminus\bar B_F(0,1).
	\end{cases}
\end{equation} Combining~\eqref{norms_1} with~\eqref{norms_2} yields $\|\varphi(y)-\psi(y)\|_{E^*}\leq 1$. So, one gets
\begin{equation*}\label{supremum_a1}
	\sup\limits_{y\in E}\|\varphi(y)-\psi(y)\|_{E^*}\leq 1.
\end{equation*}
We have thus shown that the inequality~\eqref{supremum_a1-6} holds for the chosen operators~$\psi$ and~$\varphi$.
$\hfill\Box$

\section{Concluding Remarks}\label{Sect-5}

We have studied six open questions related to the maximum displacements and generalized maximum displacements of nonvanishing continuous vector-valued functions in finite-dimensional and infinite-dimensional normed spaces. Our main results are the following:

- Sharp lower estimates for the maximum displacements (Theorems~\ref{thm_1a},~\ref {thm_1}, and~\ref{thm_6});

- A refined solution to a question on a type of positive approximate eigenvalues of nonvanishing continuous vector-valued functions (Theorem~\ref{thm_1a-appl});

- Sufficient conditions for the infinite  maximum displacements of nonvanishing continuous vector-valued functions on Hilbert spaces (Theorem~\ref{thm_8});

- Necessary conditions for nonvanishing continuous vector-valued functions on infinite-dimensional normed spaces to have finite  maximum displacements (Theorem~\ref{prop_9});

- Sufficient conditions for a nonvanishing continuous vector-valued function on a normed space to possess infinite generalized maximum displacements of (Theorem~\ref{thm_4.4});

- The existence of nonvanishing continuous vector-valued functions on an arbitrary infinite-dimensional normed space with finite maximum displacements (Theorem~\ref{thm_1-6}); 

- The existence of a pair of continuous vector-valued functions on an arbitrary infinite-dimensional Hilbert space satisfying certain conditions, which has finite generalized maximum displacement (Theorem~\ref{thm_2-6}).

\medskip
These new results and the proofs show how the Hartman-Stampacchia Theorem on the solution existence of variational inequalities can be useful for the investigations of rather difficult questions in nonlinear functional analysis.

\begin{acknowledgements}
	This research was supported by the project NCXS02.01/24-25 of the Vietnam Academy of Science and Technology. The authors are indebted to Professor Biagio Ricceri for suggesting the open questions and for warm hospitality at Catania University, Italy, during their one-month stay in October 2024.
\end{acknowledgements}

\end{document}